\newcommand{\figref}[1]{{Figure~\ref{#1}}}
\newcommand {\be}{\begin{equation}}
\newcommand {\bes}{\begin{displaymath}}
\newcommand {\es}{\end{displaymath}}
\newcommand {\e}{\end{equation}}
\newtheorem{example}{\underline{Example}}
\newcommand {\bea}{\begin{eqnarray}}
\newcommand {\ea}{\end{eqnarray}}
\newtheorem{proposition}{Proposition}[section]
\newtheorem{theorem}{Theorem}[section]
\newtheorem{Assumption}{Assumption}[section]
\newtheorem{lemma}{Lemma}[section]
\newtheorem{remark}{Remark}[section]
\newtheorem{definition}{Definition}[section]
\newcommand{\defref}[1]{{Definition~\ref{#1}}}
\newenvironment{proof}[1][Proof]{\textbf{#1.} }{\hspace{\stretch{1}}\rule{0.5em}{0.5em}}
\newcommand{\Leja}{{L\'{e}ja}\xspace}
\newcommand{\Dt}{\Delta t}
\newcommand{\thmref}[1]{{Theorem~\ref{#1}}}
\newcommand{\lemref}[1]{{Lemma~\ref{#1}}}
\newcommand{\secref}[1]{{Section~\ref{#1}}}
\newcommand{\assref}[1]{{Assumption~\ref{#1}}}
\newcommand{\propref}[1]{{Proposition~\ref{#1}}}
\journal{Computers \& Mathematics with Applications}
\begin{document}
\begin{frontmatter}
\title{A\MakeLowercase{N  EXPONENTIAL INTEGRATOR FOR FINITE VOLUME DISCRETIZATION OF NONLINEAR PARABOLIC PARTIAL DIFFERENTIAL EQUATION}}
\author[at,atb]{Antoine Tambue}
\ead{antonio@aims.ac.za}
\address[at]{The African Institute for Mathematical Sciences(AIMS) and Stellenbosh University,
6-8 Melrose Road, Muizenberg 7945, South Africa}
\address[atb]{Center for Research in Computational and Applied Mechanics (CERECAM), and Department of Mathematics and Applied Mathematics, University of Cape Town, 7701 Rondebosch, South Africa.}

\begin{abstract}
We consider the numerical approximation of a general second order
 semi--linear parabolic  partial differential equation. Equations of
this type arise in many contexts, such as  transport in porous media which  is fundamental in many geo-engineering applications, including oil and gas recovery from subsurface.
Using the finite volume  with  two-point flux approximation on regular mesh combined with exponential 
time differencing of order one (ETD1) for temporal discretization, we derive the $L^{2}$ estimate
under the assumption that the non linear term is locally  Lipschitz.  Numerical simulations to sustain the theoretical results are provided.

\end{abstract}
\begin{keyword}
Parabolic  partial differential equation \sep Finite volume method \sep
  Exponential integrators \sep Errors estimate
\end{keyword}
\end{frontmatter}

\section{Introduction}

Flow and transport are fundamental in many geo-engineering applications, including oil and gas recovery from
hydrocarbon reservoirs, groundwater contamination and sustainable use
of groundwater resources, storing greenhouse gases (e.g. CO$_2$) or
radioactive waste in the subsurface, or mining heat from geothermal
reservoirs.
 In porous media, a non-degenerated advection-diffusion-reaction is given by 
\begin{eqnarray}
\label{adr}
  \dfrac{\partial X}{\partial t}=\nabla \cdot(\mathbf{D}(\mathbf{x}) \nabla  X)-\nabla \cdot(\mathbf{q} (\mathbf{x}) X)+f(\mathbf{x},X)
 \quad (\mathbf{x},t)\,\in \Omega \times \left[ 0,T\right],
\end{eqnarray}
where  $\Omega$ is  an open domain  of $ \mathbb{R}^{d},\;d\in \{2,3 \}$,  $\mathbf{D}$ 
is the symmetric dispersion tensor, $X$ is the unknown concentration of the contaminant, $\mathbf{q}$ the Darcy's velocity and $f$ 
 the reaction and source term.  For the sake of simplicity, without loss of generality, we assume that $f$ is explicitly independent of time. The  model equation \eqref{adr}  finds interest in many engineering problems
with specific coefficients. 
Finite element, finite volume or the combination finite element-finite volume methods are mostly used for space discretization 
of the problem \eqref{adr}  while explicit, semi implicit and fully
implicit methods are usually used for time discretization (see \cite{Stig,FV,AM,Afif,EFV}).
Due to time step size constraints, fully implicit schemes are more popular 
for time discretization  for quite a long time compared to explicit Euler schemes. However, implicit schemes need at each 
time step a solution of large systems of nonlinear equations. This can be the bottleneck in computations. In recent years, 
exponential integrators have become an attractive
 alternative in many evolutions equations (see \cite{Antoine,TLGspe,TLGspe1,ATthesis, ost,LEJAK,LE,LE1,LEJAK,LEJAK1,Tr}).
In contrast to classical methods, they are robust with respect to the P\'{e}clet number, they do not require the solution of large linear systems.
Instead they make explicit use of the
matrix exponential and related matrix functions. The price to pay 
is  the computing of the matrix
exponential functions of the non diagonal matrices,
 which  has revived interest and significance progresses nowadays 
(see \cite{ost,ATthesis,LEJAK,LE1,LEJAK,LEJAK1}).

In this work, we combine a finite volume method with the first order 
exponential time differencing scheme of order 1 (ETD1).
Although both discretization techniques 
have been together used 
for solving evolutionary  problems like \eqref{adr} (see \cite {Antoine,TLGspe,TLGspe1}), a proper 
combination of rigorous  convergence proof of them has been lacking so far.  Furthermore  the nonlinear term is assumed to be locally  Lipschitz, which covers many reaction functions in geo-engineering applications.

The paper is organised as follows. In \secref{sec:semi}, we
present the semi group formulation of \eqref{adr}, the existence and uniqueness of  
the solution along with some proprieties of  the mild solution.
 In \secref{sec:FVm}, we present the  finite volume space discretization of \eqref{adr}, 
 the existence and  the uniqueness of  the corresponding semi-discrete problem, and  
 the $L^{2}$  error estimate between the exact solution and the semi-discrete solution.
We end by presenting in \secref{sec:etd} The time discretization  of the semi-discrete problem based  on ETD1 
scheme is presented in \secref{sec:etd}, along with the convergence proof of  the fully discrete scheme based 
on  finite volume method and ETD1 scheme.  We end by providing numerical simulations to sustain the theoretical results in \secref{sim}. 
These results also show the efficiency of the ETD1 scheme compared to the standard time integrators, from which ETD1  scheme is ten times faster that the standard implicit scheme.

\section{Semi group formulation and well posedness}
\label{sec:semi}
Let us start by presenting briefly the notation of the main function
spaces and norms that we will use in this paper. 
We denote by $\Vert \cdot \Vert$ the norm associated to
the inner product $(\cdot ,\cdot )$ of the Hilbert space $H=L^{2}(\Omega)$. The norms in the Sobolev spaces $H^m(\Omega),\, m \geqslant 0$ will be denoted by
$\Vert. \Vert_m$. The space $H^{-1}(\Omega)$ is  the dual of $H_0^{1}(\Omega)$ equipped with the norm $ \Vert u\Vert_{-1}=\underset{ v\in H_0^{1}(\Omega)}{\sup}\dfrac{\vert (u,v) \vert}{\Vert u\Vert_{1}}$.
For a Banach space $\mathcal{V}$ we denote by 
$L(\mathcal{V})$  the set of bounded linear mapping  from
$\mathcal{V}$ to $\mathcal{V}$. 
 We assume that the domain $\Omega$ is bounded, has a smooth  boundary or is a convex polygon.
For the sake of simplicity, without loss of generality, we use the homogeneous Dirichlet boundary condition. 
We  also assume that  the Darcy velocity $\mathbf{q}$ is known, and satisfies the
 mass conservation for  incompressible fluids without internal source, that is $\nabla\cdot \mathbf{q}=0.$

 For  a given initial solution $X_0 \in H$, the  model problem \eqref{adr} is reformulated as:
find the function
 $X(t)\in H^{1}(\Omega)$ such that
\begin{eqnarray}
\label{adr6}
\begin{cases}
 \partial X/ \partial t + \mathcal{A} X= f(\mathbf{x},X) \quad \quad  \quad \quad&  (\mathbf{x},t) \in \Omega \times \left[0,T \right]  \\
 X(\mathbf{x},0)=X_{0} \quad \quad \quad \quad & \mathbf{x}  \in \Omega\\
X(\mathbf{x},t) =0   \quad \quad \quad \quad & (\mathbf{x},t) \in \partial \Omega \times \left[0,T \right], 
\end{cases}
 \end{eqnarray}
where
\begin{eqnarray*}
  \mathcal{A}X = \mathcal{A}( \mathbf{x})X &=&- \nabla\cdot\left( \mathbf{D}\nabla X\right) +\nabla \cdot (\mathbf{q}(\mathbf{x})X)\\
  &=-& \underset{i,j=1}{\sum^{d}}\dfrac{\partial }{\partial x_{i}}\left( D_{i,j}(\mathbf{x})\dfrac{\partial X
 }{\partial x_{j}}\right) + \underset{i=1}{\sum^{d}}q_{i}(\mathbf{x})\dfrac{\partial X
 }{\partial x_{i}}+ \left(\nabla \cdot \mathbf{q}\right) X \\
&=-&\underset{i,j=1}{\sum^{d}}\dfrac{\partial }{\partial x_{i}}\left( D_{i,j}(\mathbf{x})\dfrac{\partial X
 }{\partial x_{j}}\right) + \underset{i=1}{\sum^{d}}q_{i}(\mathbf{x})\dfrac{\partial X
 }{\partial x_{i}}.
\end{eqnarray*}
For well posedness of \eqref{adr6}, 
 we assume that $\mathbf{D}$ is symmetric, $D_{i,j} \in L^{\infty}(\Omega), q_{i}\in L^{\infty}(\Omega)$ and  there exists a positive constant $c_{1}>0$ such that 
\begin{eqnarray}
\label{ellipticity}
\underset{i,j=1}{\sum^{d}}D_{i,j}(\mathbf{x})\xi_{i}\xi_{j}\geq c_{1}\vert \xi \vert^{2}  \;\;\;\;\;\;\forall \xi \in \mathbb{R}^{d}\;\;\; \mathbf{x} \in \overline{\Omega}\;\;\; c_{1}>0,
\end{eqnarray}
and 
\begin{eqnarray}
\label{lipf}
 \vert f(\mathbf{x},u)-f(\mathbf{x},v) \vert \leq L \left( 1 + \vert u\vert^{\gamma}+ \vert v\vert^{\gamma}\right) \vert u- v \vert\;\;\;\forall u, v \in \mathbb{R}\;\; x \in \overline{\Omega},\;t\in [0,T], 
\end{eqnarray}
or 
\begin{eqnarray}
\label{lipff}
 \vert f(\mathbf{x},u)-f(\mathbf{x},v) \vert \leq L \left( 1+ u+v+ \vert u\vert^{\gamma}+ \vert v\vert^{\gamma}\right) \vert u- v \vert\;\;\;\forall u, v \in \mathbb{R}\;\; x \in \overline{\Omega},\;t\in [0,T], 
\end{eqnarray}
with  $\gamma =2 $ for $d=3$ and  $\gamma \in \left[ 0, \infty \right) $  for $d=2$.

Set  $V=H^{1}_{0}(\Omega)$, the bilinear form associated to the operator $\mathcal{A} $ is given by
\begin{eqnarray}
\label{tvar}
a(u,v)=\int_{\Omega}\left(\underset{i,j=1}{\sum^{d}} D_{i,j}\dfrac{\partial u}{\partial x_{j}} \dfrac{\partial v}{\partial x_{i}}+\underset{i=1}{\sum^{d}}q_{i} \dfrac{\partial u}{\partial x_{j}}v\right)dx\;\;\;\;\;\;\; u, v \in V. 
\end{eqnarray}
According to G\aa{}rding' s inequality (see \cite{ATthesis,lions}), there exists two positive constants $c_{0}$ and $\lambda_{0}$  such that
\begin{eqnarray}
 \label{coer}
  a(v,v)+c_{0}\Vert v\Vert^{2}\geq  \lambda_{0}\Vert v\Vert_{1}^{2}\;\;\; \quad \quad \forall v\in V.
\end{eqnarray}
By adding $c_{0}X$ in both side of the first equation of \eqref{adr} we have a new operator that we still call $\mathcal{A}$  corresponding to the new bilinear form that we still call 
$a$ such that the following  coercivity property holds
\begin{eqnarray}
\label{ellip}
a(v,v)\geq \;\lambda_{0} \Vert v\Vert_{1}^{2}\;\;\;\;\;\forall v \in V.
\end{eqnarray}
 For sake of simplicity, we will still call the right hand side of the first equation of \eqref{adr6} $f$. We define the following  Nemytskii operator $F: H\rightarrow H$  by 
 \begin{eqnarray}
 \label{N}
(F(X))(x)= f(x,X). 
 \end{eqnarray}


Using the Green's formula, the  weak  form of \eqref{adr6} consists to find the function  $X(t) \in V$ such that
\begin{eqnarray}
\label{var6}
\begin{cases}
 (X_{t},\chi) + a(X,\chi) = (F(X),\chi) \quad \quad  \quad \quad&   \forall \chi \in V ,\quad  t \in \left[0,T \right]  \\
X(0)=X_{0}.
\end{cases}
\end{eqnarray}
Note that $a(,)$ is bounded in $V\times V$, so the following  operator $A: V\rightarrow V^{*}$ is well  defined by Riez's representation Theorem

\begin{eqnarray}
 a(u,v)=\langle Au, v\rangle,\,\,\,\, \forall u, v \in V,
\end{eqnarray}
where $V^{*}$ is the adjoint space (or dual space) of $V$ and $\langle ,\rangle$ the duality pairing between $V^{*}$ and $V$. By  identifying $H$ to its adjoint space $H^{*}$, 
we get the following  continuous and dense inclusions
\begin{eqnarray}
\label{gerland}
 V\subset H \subset V^{*}.
\end{eqnarray}
So, we have 
\begin{eqnarray}
\label{gerland1}
 (u,v)=\langle u, v \rangle \qquad \qquad \qquad \forall u\in  H,\, \forall  v\in V.
\end{eqnarray}
The domain of $A$ denoted by  $\mathcal{D}(A)$ is defined by 
\begin{eqnarray}
 \mathcal{D}(A)= \{ u \in V,\, Au \in H \}. 
\end{eqnarray}
We write the restriction of $A: V\rightarrow V^{*}$ to $\mathcal{D}(A)$ again by  $A$, which is therefore regarded as an operator of $H$ (more precisely the $H$ realization of $\mathcal{A}$ \cite[p. 812]{lions}).
In the abstract setting,  equation \eqref{var6}  is equivalent  to find  the function  $X(t) \in V$ such that

\begin{eqnarray}
\label{var}
\begin{cases}
 X_{t} + AX = F(X) ,\quad \qquad \qquad  t \in \left[0,T \right]  \\
X(0)=X_{0},
\end{cases}
\end{eqnarray}
where equation \eqref{var} is understood in  the space $V^{*}$ using \eqref{gerland} and \eqref{gerland1}.
As the domain $\Omega$ has a smooth  boundary or is a convex polygon,  we therefore have (see \cite{lions})
\begin{eqnarray}
\label{dom}
  \mathcal{D}(A)= H_{0}^{1}(\Omega)\cap H^{2}(\Omega).
 \end{eqnarray}
The $ V-$ellipticity (\ref{ellip}) implies that $-A$ is a sectorial on $H=L^{2}(\Omega)$ (see \cite{Henry,lions}) i.e.  there exists $C_{1},\,\theta \in (\frac{1}{2}\pi,\pi)$ such that
\begin{eqnarray}
 \Vert (\lambda I +A )^{-1} \Vert_{L(H)} \leq \dfrac{C_{1}}{\vert \lambda \vert }\;\quad \quad 
\lambda \in S_{\theta},
\end{eqnarray}
where $S_{\theta}=\left\lbrace  \lambda \in \mathbb{C} :  \lambda=\rho e^{i \phi},\; \rho>0,\;0\leq \vert \phi\vert \leq \theta \right\rbrace $.

 Then  $-A$ is the infinitesimal generator of bounded analytic semigroups $S(t):=e^{-t A}$  on $L^{2}(\Omega)$  such that
\begin{eqnarray}
S(t):= e^{-t A}=\dfrac{1}{2 \pi i}\int_{\mathcal{C}} e^{t\,\lambda}(\lambda I + A)^{-1}d \lambda,\;\;\;\;\;\;\;
 \;t>0
\end{eqnarray}
where $\mathcal{C}$  denotes a path that surrounds the spectrum of $-A $.

The coercivity property in \eqref{ellip} implies  also that the set of the real part of the spectrum of  $A$ is non negative, which allows 
the definition of the fractional power of $A$
as:  for any  $\alpha>0$
\begin{eqnarray}
\left\lbrace \begin{array}{l}
 A^{-\alpha} = \dfrac{1}{\Gamma (\alpha)} \int_{0}^{\infty}t^{\alpha-1} e^{-A t} dt\\
\newline\\
A^{\alpha}=  \left(A^{-\alpha}\right)^{-1}
\end{array}\right.
\end{eqnarray}
 where $\Gamma (\alpha)$ is the Gamma function of $\alpha $ (see \cite{Henry}). It is  well known that  $\Vert.\Vert_{\alpha}\equiv \Vert A^{\alpha/2}. \Vert$ 
in the space $\mathcal{D}(A^{\alpha/2})$, and that $V=\mathcal{D}(A^{1/2})=\mathcal{D}(A^{*\,1/2})$ (see \cite{Stig,Lionsj}). Note that $A^{*\,1/2}$ is the adjoint of $A^{\,1/2}$.

For the nonlinear reaction term we make the following assumption
\begin{proposition} \textbf{[Lipschitz condition  for $F$]}\\
\label{alip}
Under  the  assumption  \eqref{lipf} or \eqref{lipff} on the  nonlinear  function $f$, let $F$ the  Nemytskii operator  corresponding to $f$ defined by \eqref{N}. 
 For each bounded set $\mathcal{B}\subset V$ there is a constant $C(\mathcal{B})$ such that  
 \begin{eqnarray}
 \label{lip}
\Vert F(u)-F(v) \Vert_{-1} \leq C(\mathcal{B})\Vert u-v\Vert,\;\;\;\;\;\forall u,v \in \mathcal{B}\\\
\label{lip1}
  \Vert F(u)-F(v) \Vert \leq C(\mathcal{B})\Vert u-v\Vert_{1},\;\;\;\;\;\forall u,v \in \mathcal{B}\ .
\end{eqnarray}
\end{proposition}
\begin{proof}
 The proofs of \eqref{lip} and \eqref{lip1} can  be found in \cite{Stig} for function $f$ of type  \eqref{lipf}.
 The proofs for  function $f$ of  type \eqref{lipff} can easily be deducted.
 Indeed using Holder inequality yields
 { \small {
 \begin{eqnarray}
  \lefteqn {\Vert F(u)-F(v)\Vert}\\
  &\leq& C \left(\left(\Vert u \Vert_{L^{q_{1}}(\Omega)} + \Vert u \Vert_{L^{q_{1}}(\Omega)}\right) \Vert u-v\Vert_{L^{p_1}(\Omega)} +\left(1+\Vert u \Vert_{L^q(\Omega)}^{\gamma}+ \Vert v \Vert_{L^q(\Omega)}^\gamma\right) \Vert u-v\Vert_{L^p(\Omega)}\right) \nonumber
 \\
  && \qquad \qquad \qquad \qquad \qquad \qquad \qquad \qquad \qquad \qquad \qquad \qquad \qquad \qquad \qquad \qquad \qquad \forall u, v \in \mathcal{B},\nonumber
 \end{eqnarray}
 }}
 where $\frac{1}{p}+\frac{\gamma}{q}=\frac{1}{2}$ with $p=q=6$ if $d=3$,   and arbitrary $p \in (1, \infty) $   if $d = 2$, 
  $\frac{1}{p_1}+\frac{1}{q_1}=\frac{1}{2}$ with $p_1 \in [3,6] $\footnote{The corresponding $q_1$ is in  the same interval} if $d=3$ and arbitrary $p_1 \in (2, \infty) $  if $d = 2$ .
  Since $\Omega$ is bounded,  $L^r(\Omega)\hookrightarrow L^s(\Omega)$  for $r\geq s$, combining with Sobolev  the embedding theorem, we  therefore  have
  \begin{eqnarray}
   \Vert F(u)-F(v)\Vert  &\leq& C \left(1+ \Vert u \Vert_{1}+\Vert v \Vert_{1}+\Vert u \Vert_{1}^{\gamma}+ \Vert v \Vert_{1}^\gamma\right) \Vert u-v\Vert_{1}.                            
  \end{eqnarray}
 Lipschitz  condition \eqref{lip} is proved  in the same manner using  the one in \cite{Stig} for  function $f$ of  type \eqref{lipf}.
\end{proof}

By Duhamel's principle we may represent  the solution of \eqref{var}  by the following  integral equation
\begin{eqnarray}
\label{mild}
 X(t)=S(t)X_{0}+ \int_{0}^{t}S(t-s)F(X(s))ds,\;\;\;\;\  t \in \left[0,T \right].
\end{eqnarray}
  \begin{theorem}
  \label{th1e}
   Assume that  $-A$ is the  infinitesimal generator of bounded analytic semigroup ($\mathbf{D}$ is symmetric, $D_{i,j} \in L^{\infty}(\Omega), q_{i}\in L^{\infty}(\Omega)$ 
    and the inequality \eqref{ellipticity} is fulfilled) and \eqref{lipf} (or \eqref{lipff}) is  satisfied.
  For any bounded set $ \mathcal{B}_{0} \subset V$ there  is $t^{*} = t^{*} ( \mathcal{B}_{0})$ 
  such that equation (\ref{mild})   has an unique solution  $ X \in C([0,t^{*}],H^{1}(\Omega)$ for any $X_{0} \in \mathcal{B}_{0}$.                      
  \end{theorem}
\begin{proof}
 Applying the contraction mapping principle in the topology of
 the Banach space $C([0,T],H^{1}(\Omega))$
to the integral equation (\ref{mild}) \cite[Theorem 3.3.3]{Henry} or \cite[Theorem 6.3.1]{pazy}
 ensure the existence and uniqueness of $X$.
 \end{proof}
 \begin{remark}
 The regularity of the solution $X$ depends of  the regularity of  the coefficients $ \mathbf{D}= (D_{i,j})_{1\leq i,j\leq d},\mathbf{q}=(q_i)_{1\leq i\leq d}$ as we can observe in \cite{evans}.
 \end{remark}

 The following proposition will be largely used in this work
 \begin{proposition}
\textbf{[Smoothing properties of the semi group \cite{Henry}]}\\
\label{prop1}
 Let $\beta \geq 0 $ and $0\leq \gamma \leq 1$, then  there exists $C>0$ such that
\begin{eqnarray*}
 \Vert A^{\beta}S(t)\Vert_{L(H)} &\leq& C t^{-\beta}\;\;\;\;\; \text {for }\;\;\; t>0,\\
  \Vert A^{-\gamma}( \text{I}-S(t))\Vert_{L(H)} &\leq& C t^{\gamma} \;\;\;\;\; \text {for }\;\;\; t\geq0.
\end{eqnarray*}
In addition, the following results hold
$$A^{\beta}S(t)= S(t)A^{\beta}\quad \text{on}\quad \mathcal{D}(A^{\beta} ).$$
$$\text{If}\;\;\; \beta \geq \alpha \quad \text{then}\quad
\mathcal{D}(A^{\beta} )\subset \mathcal{D}(A^{\alpha} ).$$
$$\Vert D_{t}^{l}S(t)v\Vert_{\beta}\leq C t^{-l-(\beta-\alpha)/2} \,\Vert v\Vert_{\alpha},\;\; t>0,\;v\in  \mathcal{D}(A^{\alpha/2})\;\; l=0,1,$$
where $ D_{t}^{l}:=\dfrac{d^{l}}{d t^{l}}$, $\Vert .\Vert_{\alpha}:= \Vert A^{\alpha/2}.\Vert$.
\end{proposition}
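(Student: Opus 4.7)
The plan is to establish each assertion by combining the sectoriality of $-A$ already stated in the excerpt with the Dunford functional calculus, following the classical semigroup-theoretic route (Henry, Pazy). I would first prove the bound $\|A^{\beta}S(t)\|_{L(L^{2}(\Omega))} \le C t^{-\beta}$ for $t>0$. Starting from the contour representation $S(t)=\tfrac{1}{2\pi i}\int_{\mathcal C} e^{t\lambda}(\lambda I+A)^{-1}d\lambda$, the functional calculus gives $A^{\beta}S(t)=\tfrac{1}{2\pi i}\int_{\mathcal C}(-\lambda)^{\beta}e^{t\lambda}(\lambda I+A)^{-1}d\lambda$ on a path $\mathcal{C}$ parametrising the boundary of a sector around the spectrum of $-A$ with opening angle $\theta\in(\pi/2,\pi)$. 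Deforming $\mathcal{C}$ to pass at distance $1/t$ from the origin and rescaling $\lambda=t^{-1}z$ pulls a factor $t^{-\beta}$ outside the integral; the resolvent bound from the sectoriality controls the integrand and yields convergence to a constant independent of $t$.

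Next, for $\|A^{-\gamma}(I-S(t))\|_{L(L^{2}(\Omega))}\le C t^{\gamma}$ with $\gamma\in[0,1]$, I would use the identity $I-S(t)=\int_{0}^{t} A\, S(s)\,ds$, valid on $\mathcal{D}(A)$ and extended by density. This gives
\begin{equation*}
A^{-\gamma}(I-S(t)) = \int_{0}^{t} A^{1-\gamma}S(s)\,ds,
\end{equation*}
and applying the first bound with exponent $1-\gamma\ge 0$ produces the integrand estimate $C s^{-(1-\gamma)}$, which integrates to $C t^{\gamma}/\gamma$ (the case $\gamma=0$ being trivial). The commutation relation $A^{\beta}S(t)=S(t)A^{\beta}$ on $\mathcal{D}(A^{\beta})$ follows immediately from the Dunford representations (both operators are functions of the same $A$), and the domain inclusion $\mathcal{D}(A^{\beta})\subset\mathcal{D}(A^{\gamma})$ for $\beta\ge\gamma\ge 0$ is obtained by writing $A^{\gamma}v=A^{-(\beta-\gamma)}(A^{\beta}v)$ and observing that $A^{-(\beta-\gamma)}$ is bounded (from its integral definition combined with the analyticity of $S$).

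Finally, for $\|D_{t}^{l}S(t)v\|_{\beta}\le C t^{-l-(\beta-\alpha)/2}\|v\|_{\alpha}$, I would use $D_{t}^{l}S(t)=(-A)^{l}S(t)$ on $\mathcal{D}(A^{l})$ together with commutativity to write $A^{\beta/2}D_{t}^{l}S(t)v=A^{(\beta-\alpha)/2+l}S(t)A^{\alpha/2}v$, and then apply the first bound with exponent $(\beta-\alpha)/2+l$; when this exponent is negative the operator is simply bounded and the uniform boundedness of $S(t)$ suffices. The main technical hurdle is the first estimate: choosing a contour that is admissible in the sector $S_{\theta}$, justifying the change of variables $\lambda=t^{-1}z$, and verifying absolute convergence of the rescaled integral. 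Everything else reduces to functional-calculus manipulations plus an elementary integration. Since this is all entirely standard, the cleanest presentation is to cite \cite{Henry} (Theorems 1.4.2--1.4.3) and \cite{pazy} (Chapter 2) for the technical details and simply record the precise form of the estimates needed later in the paper.
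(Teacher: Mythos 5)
Your proposal is correct and matches the paper's treatment: the paper states this proposition without proof, simply citing \cite{Henry}, and your contour-integral/functional-calculus sketch is exactly the standard argument behind that citation (which you also end up invoking, together with \cite{pazy}). One minor caveat on the final estimate: when $l+(\beta-\alpha)/2<0$, uniform boundedness of $S(t)$ only yields a constant bound, not the then-positive power $Ct^{-l-(\beta-\alpha)/2}$ (which would in fact fail as $t\to 0$); but that regime is never used in the paper and the issue lies with the statement rather than with your argument.
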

The following lemma will be also used in our errors estimates.
 \begin{lemma}
\label{lemmad1}
Let $X$ be the  mild solution  of (\ref{adr6}) given  in (\ref{mild}).
Let $\mathcal{B}\subset V$ be a bounded set such that $ \forall t\in[0,t^{*}(\mathcal{B})], \, X(t) \in \mathcal{B}$.
Let  $t_{1}, t_{2} \in [0,T]\subset [0,t^{*}(\mathcal{B})],\;t_{1}<t_{2}$, the following estimates hold :
\begin{itemize}
 \item (i)
If $X_{0} \in \mathcal{D}(A)$ then
\begin{eqnarray*}
\Vert X(t_{2})- X(t_{1}) \Vert &\leq&  C (\mathcal{B})
(t_{2}-t_{1})^{1-\epsilon} \left(\Vert
  X_{0}\Vert_{2}+1
     \right),
\end{eqnarray*}
for $\epsilon \in (0,1/4)$ small enough.
\item (ii)  If $X_{0} \in \mathcal{D}(A)$ and $F$ satisfies the Lipschitz condition in \eqref{lip} then
\begin{eqnarray*}
\Vert X(t_{2})- X(t_{1}) \Vert &\leq&  C(\mathcal{B})
(t_{2}-t_{1}) \left(\Vert
  X_{0}\Vert_{2}+1)
     \right).
\end{eqnarray*}
\end{itemize}
\end{lemma}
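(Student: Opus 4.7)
I would start from the Duhamel formula \eqref{mild} and split
\begin{align*}
X(t_2)-X(t_1) &= \bigl(S(t_2)-S(t_1)\bigr)X_0 + \int_{t_1}^{t_2} S(t_2-s)R(X(s))\,ds \\
&\quad + \int_0^{t_1} \bigl(S(t_2-s)-S(t_1-s)\bigr) R(X(s))\,ds,
\end{align*}
and estimate the three pieces separately using the smoothing properties of \propref{prop1}.

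For part (i), the first piece factors as $-S(t_1)(I-S(t_2-t_1))A^{-1}\,AX_0$; the bound $\Vert (I-S(h))A^{-1}\Vert \leq Ch$ (the $\gamma=1$ case of \propref{prop1}) together with the uniform boundedness of $S(t_1)$ yields $C(t_2-t_1)\Vert X_0\Vert_2$. The middle piece is linear directly from $\Vert S(\tau)\Vert \leq C$ on $L^2(\Omega)$. For the third and most delicate piece, I would use the identity $S(t_2-s)-S(t_1-s) = -S(t_1-s)(I-S(t_2-t_1))$ and insert $A^{-\gamma}A^{\gamma}$ with $\gamma := 1-\epsilon$, obtaining
\begin{equation*}
\Vert S(t_1-s)(I-S(t_2-t_1))R(X(s))\Vert \leq C(t_1-s)^{-\gamma}(t_2-t_1)^{\gamma}\Vert R(X(s))\Vert.
\end{equation*}
Integration over $s\in[0,t_1]$ converges precisely because $\gamma<1$, producing the rate $(t_2-t_1)^{1-\epsilon}$; absorbing the two linear contributions via $t_2-t_1 \leq T^{\epsilon}(t_2-t_1)^{1-\epsilon}$ completes (i).

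For part (ii), the Lipschitz hypothesis in \assref{alip} combined with $X_0 \in \mathcal{D}(A)$ promotes the mild solution to a classical one, so $X \in C^1([0,T], L^2(\Omega))$ by standard semilinear parabolic theory \cite{Henry,pazy}. I would differentiate \eqref{mild}, then integrate by parts in the nonlinear integral using $\partial_s S(t-s) = AS(t-s)$ to cancel the would-be singular term $R(X(t))$ and arrive at
\begin{equation*}
X'(t) = S(t)\bigl(R(X_0) - AX_0\bigr) + \int_0^t S(t-s)\,R'(X(s))\,X'(s)\,ds.
\end{equation*}
Since $\Vert R'\Vert \leq L$ and $\Vert S(\tau)\Vert \leq C$, Gronwall's inequality produces $\Vert X'(t)\Vert \leq C(\Vert X_0\Vert_2 + \sup_s\Vert R(X(s))\Vert)$ uniformly on $[0,T]$, after which $\Vert X(t_2)-X(t_1)\Vert \leq \int_{t_1}^{t_2}\Vert X'(s)\Vert\,ds$ yields (ii).

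The main obstacle is the borderline-divergent singularity in (i): $\Vert AS(t_1-s)\Vert \leq C(t_1-s)^{-1}$ is not integrable in $s$, which is exactly what forces $\gamma<1$ and the $\epsilon$-loss in the Hölder exponent. Part (ii) sidesteps this entirely by trading fractional semigroup smoothing for genuine $C^1$-in-time regularity of $X$, available only because the Lipschitz structure of $R$ furnishes the integration-by-parts cancellation that removes the worst $R(X(t))$ contribution in the derivative formula.
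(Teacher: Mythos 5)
Your part (i) follows the paper's proof almost verbatim: the same splitting into $(S(t_{2})-S(t_{1}))X_{0}$, the tail integral over $[t_{1},t_{2}]$, and the difference integral over $[0,t_{1}]$, with the same insertion of $A^{1-\epsilon}A^{-(1-\epsilon)}$ producing the integrable singularity $(t_{1}-s)^{-1+\epsilon}$ and the rate $(t_{2}-t_{1})^{1-\epsilon}$. No comment needed there.

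Part (ii) is where you genuinely diverge. The paper never leaves the mild formulation: it keeps the decomposition of part (i) and sharpens only the estimate of $II_{1}=\int_{0}^{t_{1}}(S(t_{2}-s)-S(t_{1}-s))R(X(s))\,ds$ by writing $R(X(s))=\bigl(R(X(s))-R(X(t_{1}))\bigr)+R(X(t_{1}))$. The first piece is $O(t_{2}-t_{1})$ because the Lipschitz condition \eqref{lip} together with part (i) gives $\Vert X(s)-X(t_{1})\Vert\leq C(t_{1}-s)^{1-\epsilon}$, which tames the $(t_{1}-s)^{-1}$ smoothing singularity into an integrable $(t_{1}-s)^{-\epsilon}$; the second piece is handled by the exact identity $\int_{0}^{t_{1}}(S(t_{2}-s)-S(t_{1}-s))\,ds=\int_{t_{1}}^{t_{2}}S(y)\,dy-\int_{0}^{t_{2}-t_{1}}S(y)\,dy$, whose norm is $\leq C(t_{2}-t_{1})$ by boundedness of the semigroup alone. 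Your route instead upgrades $X$ to $C^{1}([0,T];L^{2}(\Omega))$, bounds $\Vert X'\Vert$ via the variational equation and Gronwall, and integrates. This is a legitimate alternative and would even yield a slightly stronger (Lipschitz-in-time) statement, but two points deserve care. First, the Lipschitz condition \eqref{lip} by itself does not promote the mild solution to a classical one with derivative bounded up to $t=0$: you are implicitly using the differentiability of $R$ in $X$ from \assref{alip} and, since $R$ depends explicitly on $t$, some temporal regularity of $R$ that neither the lemma nor \assref{alip} assumes; the cited Henry/Pazy theorems also require (local H\"older) regularity in $t$ of $s\mapsto R(s,X(s))$, which is itself established by a bootstrap of exactly the kind the paper performs at the level of difference quotients. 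Second, your integration-by-parts derivation of the equation for $X'$ presupposes the differentiability you are trying to establish, so it is a formal computation rather than a proof. In short, the paper's argument buys the $O(t_{2}-t_{1})$ rate from the Lipschitz hypothesis alone via an elementary semigroup identity; yours buys a cleaner mechanism at the cost of stronger hypotheses and an appeal to heavier regularity theory whose applicability you would need to verify.
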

\begin{proof}
\textbf{Part (i)}.

Consider the difference
\begin{eqnarray}
\label{ddif}
\lefteqn{  X(t_{2})- X(t_{1})} & &\nonumber\\
&=&  \left(S(t_{2})-S(t_{1})\right)X_{0}+\left(
  \int_{0}^{t_{2}}S(t_{2}-s)F(X(s))ds-\int_{0}^{t_{1}}S(t_{1}-s)F(X(s))ds\right)\nonumber \\  
&=& I +II, 
\end{eqnarray}
so that 
$ \Vert X(t_{2})- X(t_{1})\Vert \leq  \Vert I \Vert +  \Vert II \Vert.$
We estimate each of the terms $\Vert I\Vert $ and $\Vert  II\Vert  $. For $\Vert I\Vert $, using Proposition
\ref{prop1} yields
\begin{eqnarray*}
 \Vert I \Vert
&=&\Vert S(t_{1})A^{-1}(\text{I}-S(t_{2}-t_{1}))A^{1} X_{0} \Vert 
\quad \leq \quad   C (t_{2}-t_{1}) \Vert X_{0} \Vert_{2}.
\end{eqnarray*}

For the term $II$, we have 
\begin{eqnarray*}
II&= &\int_{0}^{t_{1}}(S(t_{2}-s)-S(t_{1}-s))F(X(s))ds +\int_{t_{1}}^{t_{2}}S(t_{2}-s)F(X(s))ds\\
  &=& II_{1}+II_{2},
 \end{eqnarray*}
with
\begin{eqnarray*}
 \Vert II\Vert \leq \Vert II_{1} \Vert+ \Vert II_{2} \Vert.
\end{eqnarray*}

We now estimate each term $ \Vert II_1\Vert $ and $\Vert  II_2\Vert $. For $ \Vert II_1 \Vert$ 
\begin{eqnarray*}
 \Vert II_{1} \Vert &=& \Vert
 \int_{0}^{t_{1}}(S(t_{2}-s)-S(t_{1}-s))F(X(s))ds \Vert\\ 
  &\leq&  \int_{0}^{t_{1}}\Vert
    (S(t_{2}-s)-S(t_{1}-s)) F(X(s))\Vert ds \\ 
  &\leq&  \left(\int_{0}^{t_{1}}\Vert (S(t_{2}-s)-S(t_{1}-s))\Vert_{L(H)} 
    ds\right) \,\left(\underset{0\leq s\leq T}{\sup} \Vert F(X(s))\Vert\right).
\end{eqnarray*}
For $\epsilon \in (0,1/4)$ small enough, using Proposition \ref{prop1}  yields
\begin{eqnarray*}
   \Vert II_{1} \Vert &\leq&
  \left(\int_{0}^{t_{1}}\Vert S(t_{1}-s) A^{1-\epsilon} A^{-1+ \epsilon}(\text{I}-S(t_{2}-t_{1}))\Vert_{L(H)}  ds\right)
  \, \left(\underset{0\leq s\leq T}{\sup} \Vert F(X(s))\Vert\right)\\ 
&\leq&  \left(\int_{0}^{t_{1}}\Vert A^{1-\epsilon} S(t_{1}-s)
  A^{-1+\epsilon}(\text{I}-S(t_{2}-t_{1}))\Vert_{L(H)}  ds\right)
\, \left(\underset{0\leq s\leq T}{\sup} \Vert F(X(s))\Vert\right)\\ 
&\leq& C (t_{2}-t_{1})^{1-\epsilon}
\left(\int_{0}^{t_{1}}(t_{1}-s)^{-1+\epsilon} ds\right )\,
\left(\underset{0\leq s\leq T}{\sup} \Vert F(X(s))\Vert\right)\\ 
&\leq& C (t_{2}-t_{1})^{1-\epsilon}\, \left(\underset{0\leq s\leq T}{\sup} \Vert F(X(s))\Vert\right). 
\end{eqnarray*}

For $\Vert II_2 \Vert $, using  the fact that the semigroup is bounded, we have 
\begin{eqnarray*}
 \Vert II_2 \Vert &=& \Vert \int_{t_{1}}^{t_{2}}S(t_{2}-s)F(X(s))ds\Vert\\
&\leq&  \left(\int_{t_{1}}^{t_{2}}\Vert S(t_{2}-s)F(X(s)) \Vert ds \right)\\
&\leq& \left(\int_{t_{1}}^{t_{2}}\Vert F(X(s))\Vert ds \right)\\
 &\leq & C (t_{2}-t_{1}) \left(\underset{0\leq s\leq T}{\sup} \Vert F(X(s))\Vert\right). 
\end{eqnarray*}
Hence
\begin{eqnarray*}
 \Vert II\Vert \leq  \Vert II_{1} \Vert +\Vert II_{2} \Vert )\leq C (t_{2}-t_{1})^{1-\epsilon}  \left(\underset{0\leq s\leq T}{\sup} (\Vert F(X(s))\Vert) \right).
\end{eqnarray*}
Using the fact that $F$ satisfies \eqref{lip1}, we therefore have
\begin{eqnarray}
\label{conseq}
 \Vert F(X(t)) \Vert &\leq&   \Vert F (X_0)\Vert + \Vert F (X(t)) - F (X_0)\Vert \nonumber\\
 &\leq& \Vert F (X_0)\Vert + L \Vert X(t) -X_0\Vert_1  \nonumber\\
 &\leq& C( \mathcal{B},X_0,F).
\end{eqnarray}
Combining  \eqref{conseq} and previous estimations of $\Vert I\Vert $ and  $\Vert II\Vert $ ends the proof of part (i).

\textbf{Proof of part (ii)}. 
We consider again the difference in \eqref{ddif}.
The difference with the proof of part (i) comes from the estimation of $II_{1}$. This time we write
\begin{eqnarray*}
  II_{1}&=&\int_{0}^{t_{1}}(S(t_{2}-s)-S(t_{1}-s))F(X(s))ds\\
  &=& \int_{0}^{t_{1}}(S(t_{2}-s)-S(t_{1}-s))
  \left( F(X(s))-F(X(t_{1}))\right)ds \\ 
  &+& \int_{0}^{t_{1}}(S(t_{2}-s)-S(t_{1}-s))F(X(t_{1}))ds\\
  &=& II_{11}+ II_{12}.
\end{eqnarray*}
Remember  that $\Vert. \Vert_{-1}= \Vert A^{-1/2}. \Vert $ in $V^*$, since $H^{-1}(\Omega)=V^*= \mathcal{D}(A^{-1/2})$ as $V= \mathcal{D}(A^{1/2})$.   If $F$ satisfies the Lipschitz condition  given in \eqref{lip}, then
using the result in part (i) together with \propref{prop1} yields   
\begin{eqnarray*}
 \Vert II_{11} \Vert  &\leq&\left(\int_{0}^{t_{1}}\Vert \left(
   S(t_{2}-s)-S(t_{1}-s)\right)A^{1/2}\Vert_{L(H)} \Vert A^{-1/2} \left( F(X(s))-F(X(t_{1})\right)\Vert
   ds\right)\\ 
&\leq& C(\mathcal{B})\left(\int_{0}^{t_{1}}\Vert
  A^{1/2}\left( S(t_{2}-s)-S(t_{1}-s)\right)\Vert_{L(H)}  \Vert X(s)-X(t_{1})\Vert
   ds\right)\\ 
      &\leq& C(\mathcal{B})\left(\int_{0}^{t_{1}}\Vert
  A^{3/2}S(t_1-s) A^{-1}\left(I-S(t_{2}-t_1)\right)\Vert_{L(H)}  \Vert X(s)-X(t_{1})\Vert
   ds\right)\\
 &\leq& C(\mathcal{B}) \left((t_{2}-t_{1}) \int_{0}^{t_{1}} (t_{1}-s)^{-\epsilon-1/2}
   ds\right)\\ 
 &\leq & C(\mathcal{B}) \left(t_{2}-t_{1}\right).
\end{eqnarray*}
 We also  have 
 \begin{eqnarray*}
   \Vert II_{12} \Vert  &\leq&
   \Vert F(X(t_{1})\Vert
   \Vert\int_{0}^{t_{1}}( S(t_{2}-s)-S(t_{1}-s)) ds\Vert_{L(H)}\\ 
   &\leq& C(\mathcal{B}) \Vert \int_{0}^{t_{1}} S(t_{2}-s)-S(t_{1}-s)ds \Vert_{L(H)}. 
\end{eqnarray*}
 Using the two transformations $y=t_{2}-s,\; y=t_{1}-s$, we find
 \begin{eqnarray*}
   \Vert II_{12} \Vert 
   &= & C(\mathcal{B}) \Vert\int_{t_{2}- t_{1}}^{t_{2}} S(y) dy - \int_{0}^{t_{1}}
   S(y) dy \Vert_{L(L^{2}(\Omega))} \\ 
   &= & C(\mathcal{B}) \Vert\int_{t_{2}- t_{1}}^{t_{1}} S(y) dy +
   \int_{t_{1}}^{t_{2}} S(y) dy - \int_{0}^{t_{1}} S(y)dy \Vert_{L(H)} \\ 
   &= & C(\mathcal{B}) \Vert  \int_{t_{1}}^{t_{2}} S(y) dy - \int_{0}^{t_{2}-t_{1}}
   S(y)dy \Vert_{L(H)} \\ 
   &\leq& C(\mathcal{B}) (t_{2}-t_{1}).
\end{eqnarray*}

The estimate of $II_{1}$ combined with \eqref{conseq} in the  estimate of  $II_{2}$ ends the proof.
\end{proof}
\section{Finite Element method  for semi-linear parabolic problem}
Finite element method for space discretization has been used  in \cite{Stig} for semilinear problem \eqref{adr}, time discretization has been performed
using first order implicit and semi-implicit methods under the  
locally Lipschitz condition \eqref{lipf} or  \eqref{lipff}.
The convergence proofs of  parabolic stochastic partial
differential equations with multiplicative or additive noise are provided in \cite{GtambueIma,GTambue,GTambueexpo} 
where the space and time discretizations are performed using respectively finite element method and exponential integrators schemes. 
The convergence proof for deterministic  problem \eqref{adr} using locally Lipschitz condition \eqref{lipf} can easily be deducted by canceling the noise term in \cite{GtambueIma,GTambue,GTambueexpo} and combined with different results in \cite{Stig}.
 
 The keys features while using finite element method for space discretization comes from  the fact that the corresponding semi-discrete problem  shares  the same bilinear 
 form \eqref{tvar} with the continuous problem \eqref{adr}.
The convergence proof for deterministic  problem \eqref{adr} using locally Lipschitz condition \eqref{lipf} or \eqref{lipff} will be more difficult with finite volume method (or finite difference method) 
 for space discretization since the corresponding  bilinear form of the semi-discrete problem \eqref{discretevar} is different  with the one of  the continuous problem.
 \section{Finite volume for space discretization}
\label{sec:FVm}
\subsection{Admissible  mesh}
A cell--centred finite volume methods for heterogeneous and
 anisotropic diffusion problems remains a challenging problem.
 An active area of research consists to make the approximation of the 
diffusion flux more efficient and simple as possible (see \cite{FV1} for the references).
The finite volume method is widely  applied when the 
differential equations are in divergence form. To obtain a 
finite volume discretization, the domain $\Omega$ is  subdivided 
into subdomains $(A_{i})_{i \in \mathcal{I}},\;\mathcal{I}$ being the corresponding set of indices, called control volumes 
or control domains such that the collection of all those 
subdomains forms a partition of  $\Omega$. 
The common feature of all finite volume methods is to integrate 
the equation over each control volume $A_{i},\;i\in \mathcal{I}$  
and apply Gauss's divergence theorem to convert the volume integral
 to a surface integral. For our parabolic problem \eqref{adr6},
 finite volume methods differ in the way they approximate the
 diffusion flux $\mathcal{F}=-\mathbf{D}\nabla X$. Two techniques are mostly used: 
 the finite volume with  
two-point flux approximation (TPFA) (see \cite{FV,FV1}) and the finite volume with 
multi-point flux approximations (MPFA)(\cite{MPFA,MPFA1}). 
 
An advantage of the two-point approximation is that it provides
 monotonicity properties, under the form of a local maximum principle. It is efficient
 and mostly used in industrial simulations. 
In this paper we use the TPFA  as developed in \cite{FV}.
The main  drawback of TPFA is that it is   applicable in the
so called `` admissible mesh ''  or ''$\mathbf{D}$-orthogonal mesh'' and not in general mesh.

\begin{definition} 
\label{admissiblemesh}
[\textbf{Admissible  mesh}]

An  admissible mesh $\mathcal{T}$ for  problem \eqref{adr6} with the full diffusion tensor $\mathbf{D}$ is defined by: 
\begin{itemize}
 \item  A set $ \left\lbrace   A_{i} \right\rbrace_{i \in \mathcal{I}}$  of control volumes such that $\overline{\Omega}= \underset{i \in \mathcal{I}}{\cup}\overline{A_{i}} $ 
with the corresponding local inner product induced by $\mathbf{D}_{A_{i}}^{-1}$ where
$$ \mathbf{D}_{A_{i}}=\dfrac{1}{ \mathrm{mes}(A_{i})}\int_{A_{i}}\mathbf{D}(\mathbf{x})d\mathbf{x}.$$
 \item The corresponding  set of center points  $\left\lbrace \mathbf{x}_{i} \right\rbrace_{i \in \mathcal{I}} $ such that 
\begin{enumerate}
\item[(a)] $\mathbf{x}_{i} \in \overline{A}_{i},\; i \in \mathcal{I}$.  
\item [(b)]  $\mathbf{x}_{i}$ is the intersection of the straight lines perpendicular to the boundary of 
$A_{i}$ with respect to the inner  product induced by $\mathbf{D}_{A_{i}}^{-1}$.
\end{enumerate}
\end{itemize}
\end{definition}
Let $h=\text{size} (\mathcal{T})$ be the maximum mesh size of $\mathcal{T}$. We denote by $\mathcal{T}_{h}$ a dual Delaunay triangulation of $\mathcal{T}$
i.e. a Delaunay triangulation  where $\left\lbrace \mathbf{x}_{i} \right\rbrace_{i \in \mathcal{I}}$ 
is the set of vertices (2-D delaunay Triangulation with triangular mesh  or 3-D delaunay Triangulation with tetrahedal mesh).  
For a given  set $\left\lbrace \mathbf{x}_{i} \right\rbrace_{i \in \mathcal{I}}$, a dual mesh  $\mathcal{T}_{h}$  can easily be constructed with the Matlab  function \textbf {delaunayTriangulation.m}.

\begin{figure}[!ht]
\begin{center}
  \includegraphics[width=0.4\textwidth]{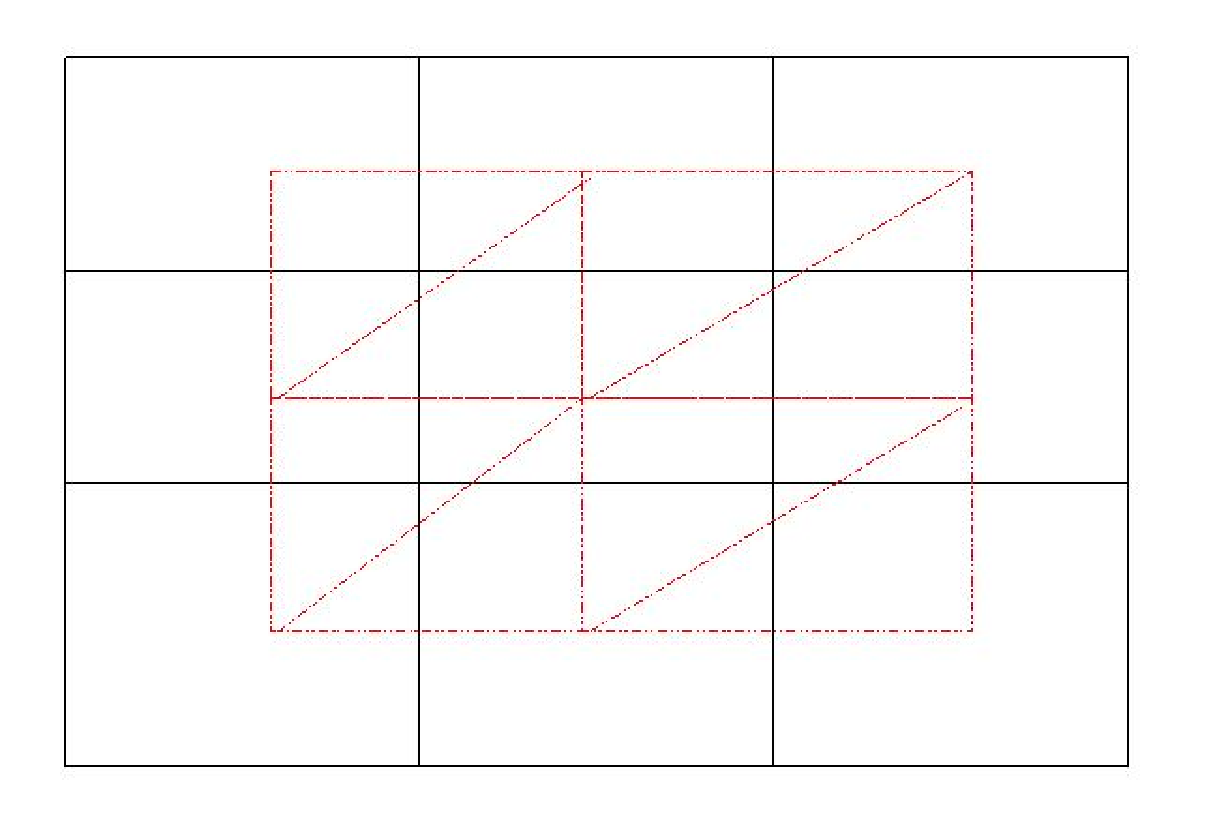}
  \end{center}
  \caption{Example of admissible mesh in $d=2$ for diagonal diffusion tensor $\mathbf{D}$. The mesh  $\mathcal {T}$ is the set of rectangular grid (in black line ) and  a corresponding  dual Delaunay triangulation
  $\mathcal{T}_{h}$(triangle mesh) is in red line. For $d=3$, this figure can represent the upper view of the set of parallelepiped grid with the corresponding dual Delaunay triangulation (tetrahedal mesh). Note that the dual triangulation  $\mathcal{T}_{h}$
  is not unique here.}
  \label{mesh}
\end{figure}
Let us illustre \defref{admissiblemesh} to make it more  understandable.
\begin{example}
\begin{itemize}
\item  In the case where  the diffusion tensor $\mathbf{D}$ is diagonal and  $\Omega$ is a rectangular or parallelepiped domain,
 any rectangular grid ($d=2$) or parallelepiped grid ($d=3$) is an admissible mesh. The set $\left\lbrace\mathbf{x}_{i}\right\rbrace$ 
is the set of centers of gravity of the rectangular grid or parallelepiped grid (see \figref{mesh}). The inner product induced locally by  $\mathbf{D}_{A_{i}}^{-1}$ 
is equivalent to the standard  inner product corresponding to the Euclidean norm $\vert.\vert$.
This mesh will yield a 5-point scheme ($d=2$) and  7-point scheme ($d=3$) for our model problem \eqref{adr6}.
 \item If $d=2$, for isotropic and heterogeneous media ($\mathbf{D}(\mathbf{x})=b(\mathbf{x})I_{2}\;\,\mathbf{x} 
\in \Omega $, $I_{2}$ being the identity matrix of dimension $2$) we can define a triangular admissible
 mesh $\mathcal {T}$ 
   to be a family of open triangular disjoint subsets of $\Omega$ such that two triangles having a 
common edge have also two common vertices. The angles of the triangles are assumed to be less than $\frac{\pi}{2}$ to 
allow the orthogonal bisectors to intersect
 inside each triangle, thus naturally defining the center point $\mathbf{x}_{i}$ of the control volume $A_{i}$.
The finite volume scheme defined on such  mesh  will yield a 4-point scheme for our model problem \eqref{adr6}. 
The inner product induced locally by  $\mathbf{D}_{A_{i}}^{-1}$ 
is equivalence to the standard  inner product corresponding to the Euclidean norm $\vert.\vert$.
\end{itemize}
\end{example}
\subsection{Finite volume space discretization and  semi-discrete solution}
Consider the modified model problem of (\ref{adr6}) where $c_{0}X$  is added on both sides  of the first equation
 of problem (\ref{adr6}), $c_{0}$ is defined in (\ref{coer}).
Consider an admissible mesh $\mathcal{T}$ in the sense of \defref{admissiblemesh}.
Denote by $\mathcal{E}$ the set of edges of control volume of $ \mathcal{T},\;\mathcal{E}_{int} $ 
the set of interior  edges of control volume of $ \mathcal{T}$, $X_{i}(t)$ 
the approximation of $X$ at time $t$ at the center (or at any point) of the control 
volume $A_i\in \mathcal{T}$ and $X_{\sigma}(t)$ the approximation of $X$ at time $t$ at 
the center (or at any point)of the edge $\sigma \in \mathcal{E}$.
 For a control volume $A_i\in \mathcal{T}$, denote by $\mathcal{E}_{i}$ the set
 of edges of $A_i$, 
$\mathrm{mes}(A_i)$ the Lebesgue measure of the control
 volume $A_i\in \mathcal{T}$. 

As in \cite{FV,EP}, integration over any control volume $A_i\in \mathcal{T}$, 
using the divergence theorem 
to convert the integral over $A_i$ to a surface integral, 
finite differences for the diffusion flux approximation \cite{FV} and 
the upwind technique 
 for the advection flux  approximation  yields
\begin{eqnarray}
\label{ode}
\left\lbrace \begin{array}{l}
 \mathrm{mes}(A_i)\dfrac{d X_{i}(t)}{dt}+\underset{\sigma \in \mathcal{E}_{i}}{\sum}\left( F_{i,\sigma}(t)+ q_{i,\sigma} X_{\sigma,+}(t)\right) + c_{0}\,\mathrm{mes}(A_i)X_{i}(t)\\
 \qquad \qquad \qquad \qquad\qquad \qquad \qquad \qquad\qquad \qquad
= \mathrm{mes}(A_i)\,f(\mathbf{x}_{i},X_{i}(t)), \\
\newline\\
D_{i,\sigma}=\vert \mathbf{D}_{A_i}\,\mathbf{n}_{i,\sigma}\vert,\quad\mathbf{D}_{A_i}=\dfrac{1}{ \mathrm{mes}(A_i)}\int_{A_i}\mathbf{D}(\mathbf{x})d\mathbf{x},\\
\newline\\
F_{i,\sigma}(t)= \mathrm{mes}(\sigma)\;D_{i,\sigma}\dfrac{X_{\sigma}(t)-X_{i}(t)}{d_{i,\sigma}}, \\
\newline\\
q_{i,\sigma}=\int_{\sigma}\mathbf{q} \cdot \mathbf{n}_{i,\sigma}d\sigma \qquad \qquad \qquad \qquad  \quad\qquad  \quad\qquad  \forall A_i \in\; \mathcal{T},\;\;\forall \sigma \in \mathcal{E}_{i}.
\end{array}\right.
\end{eqnarray}
 Here \;$\mathbf{n}_{i,\sigma}$ is the  normal unit vector to $\sigma $  outward to  $A_i$,
 $\mathrm{mes}(\sigma)$ is the Lebesgue measure 
of the edge $\sigma \in \mathcal{E}_{i}$ and $d_{i,\sigma}$ the distance between the center of $A_i$ and the edge $\sigma$.

 Since the flux is continuous at the interface of two control volumes $A_i$ and $A_j$ (denoted by $i\mid j$) we therefore have $F_{i,\sigma}(t)= -F_{j,\sigma}(t)$ for $ \sigma=i\mid j$, which  yields 
\begin{eqnarray}
\label{interf}
\left\lbrace \begin{array}{l}
F_{i,\sigma}(t)=-\tau_{\sigma}\left( X_{j}(t)-X_{i}(t)\right)= - \dfrac{\mu_{\sigma}\, \mathrm{mes}(\sigma)}{d_{i,j}}\left( X_{j}(t)-X_{i}(t)\right),\, \sigma=i\mid j \quad 
\newline\\
\tau_{\sigma}=\mathrm{mes}(\sigma) \dfrac{D_{i,\sigma}D_{j,\sigma}}{D_{i,\sigma}\; d_{i,\sigma}+D_{j,\sigma}
 d_{j,\sigma}} \quad  (\text{transmissibility through}\, \sigma)\\
\end{array}\right.
\end{eqnarray}
with 
\begin{eqnarray}
\label{transn1}
 \mu_{\sigma}=
 d_{i,j} \dfrac{D_{i,\sigma}D_{j,\sigma}}{D_{i,\sigma}\; d_{i,\sigma}+D_{j,\sigma}d_{j,\sigma}},
\end{eqnarray}
where $d_{i,j}$ is the distance between the center of $A_i$ and center of $A_j$. 
We will set  $d_{i,j}=d_{i,\sigma}$  for $\sigma=\mathcal{E}_{i} \cap \partial\Omega $. For $\sigma \subset \partial \Omega$, we  also write
\begin{eqnarray*}
\label{ninterf}
F_{i,\sigma}(t)
&=&-\tau_{\sigma}\left( X_{j}(t)-X_{i}(t)\right)\\
              &=& -\dfrac{\mathrm{mes}(\sigma) \mu_{\sigma}}{d_{i,\sigma}}\left( X_{j}(t)-X_{i}(t)\right).
\end{eqnarray*}
with
\begin{eqnarray}
\label{trans2}
\left\lbrace \begin{array}{l}
X_{j}(t)= X_{\sigma}(t)=0\\
\newline\\
 \tau_{\sigma} =\dfrac{\mathrm{mes}(\sigma) D_{i,\sigma}}{d_{i,\sigma}}  \\
\newline\\
\label{transn11}
 \mu_{\sigma}= D_{i,\sigma}.
\end{array}\right.
\end{eqnarray}

The upwind term  for advection flux $X_{\sigma,+}$ is defined as
\begin{eqnarray}
 X_{\sigma,+}(t)&=&\left\lbrace \begin{array}{l}
  X_{i}( t)\quad \text{if} \quad q_{i,\sigma}\geqslant 0  \\  
       \newline\\             
   X_{j}( t) \quad  \text{if} \quad  q_{i,\sigma}< 0         
\end{array}\right.
\;\; \;\;\text{for}\;\;\;\sigma = i\mid j\\ 
X_{\sigma,+}(t)&=&\left\lbrace \begin{array}{l}
    X_{i}( t)\quad \text{if} \quad  q_{i,\sigma}\geqslant 0 \\
\newline\\
    X_{\sigma}(t) \quad  \text{if}  \quad  q_{i,\sigma}< 0                    
                             \end{array}\right.
\;\; \text{for}\;\;\sigma \in\mathcal{E}_{i}\cap \partial \Omega.
\end{eqnarray}
We can write $X_{\sigma,+}$ as
\begin{eqnarray}
\label{up}
  X_{\sigma,+}= r_{\sigma} X_{i}(t)+(1-r_{\sigma})X_{j}(t),\quad \sigma=i\mid j  
\end{eqnarray}
where $r_{\sigma}= \dfrac{1}{2}(\text{sign}(q_{i,\sigma})+1)$.
Note that according to \eqref{N}, we have 
\begin{eqnarray}
 (F(X_i))(\mathbf{x}_{i})=f(\mathbf{x}_{i},X_{i}(t)).
\end{eqnarray}

Using previous approximations, the finite volume space discretization  for the model problem (\ref{adr6}) is given by
\begin{eqnarray}
\label{fvfd}
\left\lbrace \begin{array}{l}
 \mathrm{mes}(A_i)\dfrac{d X_{i}(t)}{dt}+\underset{\sigma\in \mathcal{E}_{i}}{\sum}\left( -\dfrac{ \mathrm{mes}(\sigma)\,\mu_{\sigma}}{d_{i,j}}\left( X_{j}(t)-X_{i}(t)\right) \right. \\
\newline\\
\left. + q_{i,\sigma} \left(r_{\sigma} X_{i}(t)+(1-r_{\sigma})X_{j}(t)\right)\right) +c_{0}\,\mathrm{mes}(A_i)X_{i}(t) = \mathrm{mes}(Ai) F(X_{i}(t)) \\
\newline\\
 X_{i}(t)=0, \: d_{i,j}=d_{i,\sigma}  \qquad \qquad \qquad  \qquad \qquad \qquad   \text{if}\; \sigma \subset \partial \Omega, \quad \forall A_i \;\in \mathcal{T}.
\end{array}\right.
\end{eqnarray}

The scheme (\ref{fvfd}) clearly indicates the affinity of the finite volume method to the
 finite difference method. However, for the subsequent analysis it is more convenient
 to rewrite scheme (\ref{fvfd}) in a discrete variational form.

Multiplying the first equation of (\ref{fvfd}) by  arbitrary numbers $v_{i}\in \mathbb{R}$ 
and summing the results over all control volume in $\mathcal{T}$ yields
\begin{eqnarray}
\label{fvfd1}
\left\lbrace \begin{array}{l}
 \underset{A_i \in \mathcal{T}}{\sum} \left[\mathrm{mes}(A_i)\dfrac{d X_{i}(t)}{dt}+\underset{\sigma \in \mathcal{E}_{i}}{\sum}\left(\dfrac{\mathrm{mes}(\sigma)\,\mu_{\sigma}}{d_{i,j}}\left( X_{i}(t)-X_{j}(t)\right)\right.\right. \\
\newline \\
\left.\left. +q_{i,\sigma} \left(r_{\sigma} X_{i}(t)+(1-r_{\sigma})X_{j}(t)\right)\right)\right] v_{i}=  \underset{A_i \in \mathcal{T}}{\sum} \mathrm{mes}(A_i)\,F(X_{i}(t))v_{i}.
\end{array}\right.
\end{eqnarray}
Let $V_{h}$ denote the space of continuous functions that are piecewise linear over the Delaunay triangulation $\mathcal{T}_{h}$ (dual of $\mathcal{T}$),
 and $X(\mathcal{T})$ be the space of the functions constant in each control volume of $\mathcal{T}$,  the following  lemma creates a one-to-one correspondence between  $V_h$ and $X(\mathcal{T})$.
  \begin{lemma}
  \label{Vh}
   There exists a one-to-one correspondence between  the space $V_h$ and $X(\mathcal{T})$, more precisely  we have :
   \begin{itemize}
   \item For any $U_h \in V_h$ corresponds  the unique function $U=(U_h(\mathbf{x}_{i}))_{i \in \mathcal{I}} \in X(\mathcal{T})$.
    \item  For any  function $U=(U_i)_{i \in \mathcal{I}} \in X(\mathcal{T})$, there exists an unique function  $U_h \in V_h$ such  that $U_h(\mathbf{x}_{i})=U_i$.
   \end{itemize}
 
    \end{lemma}

 \begin{proof}
  The first correspondence is obvious. The proof of the second correspondence can be found in  \cite[Lemma 2.10,  p.58]{EP}.
 \end{proof}

Let us consider equation \eqref{fvfd1},  according to \lemref{Vh}, there are unique functions $X_{h}(t),v_{h} \in V_{h}$ such that $X_{h}(t)(\mathbf{x}_{i})=X_{i}(t)$ and $ v_{h}(\mathbf{x}_{i})=v_{i} $ for all $A_i \in \mathcal{T}$, 
where $\mathbf{x}_{i}$ is a center of the control volume $A_i \in \mathcal{T}$ ($\mathbf{x}_{i}$ is also a vertex in $\mathcal{T}_{h}$).

Denote by $a_{h}$ the bilinear form defined by
\begin{eqnarray}
\label{discretevar}
\left\lbrace \begin{array}{l}
 a_{h}(u_{h},v_{h})= \underset{A_i \in \mathcal{T}}{\sum} \underset{\sigma \in \mathcal{E}_{i}}{\sum} \left(- \dfrac{ \mathrm{mes}(\sigma)\,\mu_{\sigma}}{d_{i,j}}\left( u_{j}-u_{i}\right) + q_{i,\sigma} \left(r_{\sigma} u_{i}+(1-r_{\sigma})u_{j}\right) \right)v_{i}\\
\newline \\
\qquad\qquad +c_{0}\,\mathrm{mes}(A_i)u_{i} v_{i}\qquad \qquad \qquad \qquad \qquad \qquad \qquad \forall u_{h},v_{h} \in V_{h},
\end{array}\right.
\end{eqnarray}
and  by $\langle.,.\rangle_{0,h}$ the scalar product on $ C(\overline{\Omega})\supset V_{h}$ defined by
\begin{eqnarray}
\label{dproduct}
 \langle u,v\rangle_{0,h}= \underset{i \in \mathcal{T}}{\sum} \mathrm{mes}(A_i)u_{i}v_{i},\quad u_{i}=u(\mathbf{x}_{i}),\quad v_{i}=v(\mathbf{x}_{i}),\quad \quad u,v \in C(\overline{\Omega}).
\end{eqnarray}
Note  that this scalar  product can be extended in $L^2(\Omega)$ (see \cite{florin}) by 
\begin{eqnarray}
\label{dproductL2}
\langle u,v\rangle_{0,h}= \underset{i \in \mathcal{T}}{\sum} \mathrm{mes}(A_i)u_{i}v_{i},
\quad u_{i}= \dfrac{1}{\mathrm{mes}(A_i)}\int_{A_i} u dx, \quad v_{i}=\dfrac{1}{\mathrm{mes}(A_i)}\int_{A_i} v dx ,\quad u,v \in L^2(\Omega).
\end{eqnarray}
Note also that when $ u\in C(\overline{\Omega})$ we will use $u_{i}=u(\mathbf{x}_{i})$  in \eqref{dproductL2}.
We can easily observe for  $ u, v \in X(\mathcal{T})$ \footnote{Remember that this is the space of the functions constant in each control volume of $\mathcal{T}$}
$\langle u,v\rangle_{0,h}=(u,v)$. The corresponding norm  of $\langle.,.\rangle_{0,h}$ is the discrete $L^{^{2}}(\Omega)$ norm  denoted by $\Vert.\Vert_{0,h}$. 
We therefore have the following  variational form of our finite volume scheme (\ref{fvfd1}).
\begin{eqnarray}
\label{varia1}
\left\lbrace \begin{array}{l}
 \langle \dfrac{d}{dt}X_{h},\varphi \rangle_{0,h}+a_{h}(X_{h}(t),\varphi)= \langle F(X_{h}(t)),\varphi \rangle_{0,h},\quad \forall \varphi \in V_{h},\quad t \in \left( 0, T\right],\\
\newline\\
X_{h}(0)=X_{h\,0}.\\
\end{array}\right.
\end{eqnarray}
Consider the operator $A_{h}: V_{h}\rightarrow V_{h} $ such that
\begin{eqnarray}
\langle A_{h} \psi,\chi \rangle_{0,h} =a_{h}(\psi,\chi)\;\;\;\; \forall \psi,\chi \in  V_{h}.
\end{eqnarray}
The semidiscrete solution in $V_{h}$ is then given by: find $X_{h}(t) \in V_{h}$ such that
\begin{eqnarray}
\label{adrh}
\left\lbrace \begin{array}{l}
 \dfrac{d X_{h}}{dt}+ A_{h}X_{h}=P_{h}F(X_{h})\quad \;\;t\in \left(0,T\right] \\
\newline\\
X_{h}(0)=X_{0 h}
\end{array}\right.
\end{eqnarray}
where $P_{h}$ is the orthogonal projection defined from $ L^2(\Omega)$ to $V_{h}$ by
\begin{eqnarray}
\label{fvprojection}
\langle P_{h}u,\chi \rangle_{0,h}= \langle u,\chi \rangle_{0,h}\;\;\;\forall \chi \in V_{h},\,\, u \in L^2(\Omega).
\end{eqnarray}
In order to provide the corresponding mild form of \eqref{adrh}, let us define the  discrete $H_{0}^{1}(\Omega)$ norm. 
\begin{definition}
   \textbf{[Discrete $H_{0}^{1}(\Omega)$ norm \cite{FV}]}\\
\label{seminorm}
 Let  $\mathcal{T}$ be an admissible finite volume mesh in the sense of \defref{admissiblemesh}, and  $X(\mathcal{T})$
 the space of the functions constant in each control volume of $\mathcal{T}$.
For $u \in X(\mathcal{T})$ corresponding to $u_h \in V_h$ (according to \lemref{Vh}), the  discrete  $H_{0}^{1}(\Omega)$ norm of $u$ and $u_h$ is defined by 
 \begin{eqnarray}
  \Vert u_h \Vert_{1,\mathcal{T}}: =\Vert u \Vert_{1,\mathcal{T}}:= \left( \underset{\sigma \in \mathcal{E}}{\sum} \tau_{\sigma}'\left(D_{\sigma}u\right)^{2}\right)^{1/2}
 \end{eqnarray}
 where 
  \begin{eqnarray*}
   \tau_{\sigma}'&=&\dfrac{mes(\sigma)}{d_{\sigma}}\\
  D_{\sigma}u &=&\vert u_{i}-u_{j}\vert \qquad \text{if}\qquad \sigma= i\vert j \in \mathcal{E}_{int}\\
 D_{\sigma}u &=&\vert u_{i}\vert \qquad  \qquad \text{if}\qquad  \sigma \in \partial \Omega.
\end{eqnarray*}
   \end{definition}
   Note that  this norm is equivalent in $V=H_{0}^{1}(\Omega)=\mathcal{D}(A^{1/2})=\mathcal{D}(A^{* 1/2})$ to  the  natural norm of $H^1(\Omega)$ which is $ \Vert . \Vert_1$.
   Following closely \cite{Stig,Lionsj} we also have $\mathcal{D}(A_h^{1/2})=\mathcal{D}(A_h^{* 1/2})$ with the following norm equivalence
   \begin{eqnarray*}
   \label{equivd}
    \Vert A_h^{1/2} u\Vert \equiv \Vert A_h^{*1/2} u\Vert \equiv \Vert u\Vert_1, \qquad \qquad  u \in V_h. 
   \end{eqnarray*}

   We make the following assumption as in \cite[Theorem 3.8]{FV}, very useful for our convergence proof.
\begin{Assumption}
\label{meshassumption}
[\textbf{Regularity of $\mathbf{D}$, $\mathbf{q}$ and $\mathcal{T}$}]\\
We assume that $\mathbf{D}$ is bounded\footnote{From \eqref{ellipticity}, $\mathbf{D}$ is bounded below, to be bounded $\mathbf{D}$ also need to be bounded above.}, $\text{the restriction of}\;\; \mathbf{D}\; \text{to  any} \;A_i \in \mathcal{T} \;\text{belongs to}
\;C^{1}(A_i,\mathbb{R}^{d\times d}),\;q_{j}\in C^{1}(\overline{\Omega})$, the discontinuities of $\mathbf{D}$ coincide with the interfaces of the mesh,
and that there exists $\zeta_{1}>0$ such that
\begin{eqnarray}
\label{meshassumption1}
\zeta_{1} h\leq d_{i,\sigma},\quad \quad \forall\,\, A_i \in \mathcal{T},\,\, \ \forall\, \sigma \in \mathcal{E}_i,
\end{eqnarray}
where $h=\text{size}(\mathcal{T})$. 
\end{Assumption}
The inequality \eqref{meshassumption1} is called regularity property of the mesh  $\mathcal{T}$.
\begin{remark}
\label{regularduall}
 The regularity property  of the dual mesh $\mathcal{T}_h$
 given in \cite[Definition 3.28, p 138] {EP}
states that there exists some constant $c>0$ such that 
\begin{eqnarray}
\label{regulardual}
 \dfrac{h_K}{\rho_K} \leqslant c,\,\quad \quad \quad \forall K \in \mathcal{T}_h 
\end{eqnarray}
 where
$h_K=\text{diam}(K)=\underset{(x,y)\in K^{2}}{\sup}d(x,y) $ and $\rho_K= \sup \{\text{diam}(S) \mid  S \;\text{is a ball in } \mathbb{R}^{d}\; \text{and }\; S \subset K \}$.

As we are dealing in $\mathcal{T}_h$ with triangle or tetrahedron, $h_K$ denotes the longest edge and $\rho_K$ the
diameter of the inscribed circle ($d=2$) or sphere ($d=3$). Using Heron's formula  and its consequences, one can prove that the regularity of mesh  $\mathcal{T}$  given by \eqref{meshassumption1} 
implies the regularity of the dual mesh $\mathcal{T}_h$ given by \eqref{regulardual}.

 \end{remark}

\assref{meshassumption} allows  the following $ V_{h}-$ ellipticity of $a_{h}$.  
\begin{theorem}
\label{coert}
 Under the regularity of the admissible mesh $\mathcal{T}$ in \assref{meshassumption}, there exists a constant $\alpha>0$ independent of $h$ such that 
 \begin{eqnarray}
\label{dellip1}
a_{h}(v_{h},v_{h})\geq \alpha\; \Vert v_{_{h}}\Vert_{1,\mathcal{T}}^{2}\;\;\;\;\; \forall v_{h} \in V_{h}.
\end{eqnarray}
\end{theorem}
\begin{proof}
Let $b_{h}^{1},\,b_{h}^{2}$ and $b_{h}^{3}$ the bilinear forms defined  in $V_{h}\times V_{h}$ by
\begin{eqnarray}
 b_{h}^{1}(u_{h},v_{h})&=&\underset{A_i \in \mathcal{T}}{\sum} \underset{\sigma \in \mathcal{E}_{i}}{\sum} - \dfrac{ \mathrm{mes}(\sigma)\,\mu_{\sigma}}{d_{i,j}}\left( u_{j}-u_{i}\right)v_{i},  
\newline\\
b_{h}^{2}(u_{h},v_{h})&=&\underset{A_i \in \mathcal{T}}{\sum} \underset{\sigma \in \mathcal{E}_{i}}{\sum}q_{i,\sigma} \left(r_{\sigma} u_{i}+(1-r_{\sigma})u_{j} \right)v_{i} 
=\underset{A_i \in \mathcal{T}}{\sum}\underset{\sigma \in \mathcal{E}_{i}}{\sum} q_{i,\sigma} u_{\sigma,+}v_{i},
\newline\\
b_{h}^{3}(u_{h},v_{h})&=&c_{0}\underset{A_i \in \mathcal{T}}{\sum}\mathrm{mes}(A_i)\,u_{i}v_{i}. 
\end{eqnarray}
Note that according to \lemref{Vh}, we have identified  $u_{h} \in V_h$ and $v_{h}\in V_h$ to their correspondent $ (u_i)_{i \in \mathcal{T}}=(u_{h}(\mathbf{x}_i))_{i \in \mathcal{T}} \in X(\mathcal{T})$  and $(v_i)_{i \in \mathcal{T}}=(v_{h}(\mathbf{x}_i)) \in X(\mathcal{T})$ in 
 the definition of $b_{h}^{1},\,b_{h}^{2}$ and $b_{h}^{3}$.
 
Using \assref{meshassumption}, mainly  the regularity of $\mathcal{T}$ ($\zeta_{1} h\leq d_{i,\sigma} \leq h$) and the fact that the coefficients of the diffusion tensor  $\mathbf{D}$ are bounded,  there exists
two constants $C_{5}(\Omega,\zeta_{1},\mathbf{D})$ and $C_{5}'(\Omega,\zeta_{1},\mathbf{D})$ such that
\begin{eqnarray}
\label{trans1}
C_{5}\leq \mu_{\sigma}=
 d_{i,j} \dfrac{D_{i,\sigma}D_{j,\sigma}}{D_{i,\sigma}\; d_{i,\sigma}+D_{j,\sigma}d_{j,\sigma}}\leq C_{5}' ,\quad \quad \sigma= i\vert j,
\end{eqnarray}
and 
\begin{eqnarray}
\label{trans11}
 C_{5}\leq \mu_{\sigma}= D_{i,\sigma} \leq \, C_{5}',\quad \quad \sigma \in \mathcal{E}_{i}\cap \partial \Omega,
\end{eqnarray}
so that
\begin{eqnarray}
 C_{5}\leq\mu_{\sigma}\,\leq \,C_{5}'\,,\quad \quad \forall \sigma \in \mathcal{E},
\end{eqnarray}
where $\mu_{\sigma}$ is defined in \eqref{transn11} and \eqref{transn1}.

Using the fact that  the  transmissibility given in (\ref{interf}) is symmetric, i.e.  $\tau_{i\vert j}=\tau_{j\vert i}$ 
and reorganizing the summation, we therefore have 
\begin{eqnarray}
\label{eq1}
 C_{5}\, \Vert v_{_{h}}\Vert_{1,\mathcal{T}}^{2} \leq b_{h}^{1}(v_{h},v_{h})\leq C_{5}'\, \Vert v_{_{h}}\Vert_{1,\mathcal{T}}^{2}. 
\end{eqnarray}
Let use some important results from \cite{FV}.
Indeed as in \cite{FV} reordering the summation over the set of edges yields
\begin{eqnarray}
\label{egaly1}
 b_{h}^{2}(v_{h},v_{h})=\underset{\sigma \in \mathcal{E}}{\sum} q_{\sigma} \left(v_{\sigma,+}-v_{\sigma,-}\right) v_{\sigma,+}
\end{eqnarray}
where
\begin{eqnarray}
 v_{\sigma,-} &=&\left\lbrace \begin{array}{l}
    v_{i}\quad \text{if} \quad  q_{i,\sigma}\leq 0 \\
\newline\\
    v_{j} \;(\text{or}\; v_{\sigma}) \quad  \text{if}  \quad  q_{i,\sigma}>0                    
                             \end{array}\right.
\;\sigma \,\in \mathcal{E}_{int} \,(\text{or}\;\sigma  \in\mathcal{E}_{i}\cap \partial \Omega),\\
q_{\sigma}&=&\vert \int_{\sigma}\mathbf{q} \cdot \mathbf{n}_{i,\sigma}d\sigma \vert.
\end{eqnarray}
Note that
\begin{eqnarray}
\label{egaly}
\underset{\sigma \in \mathcal{E}}{\sum} q_{\sigma} \left(v_{\sigma,+}-v_{\sigma,-}\right) v_{\sigma,+}= \frac{1}{2}\underset{\sigma \in \mathcal{E}}{\sum} q_{\sigma}\left(\left(v_{\sigma,+}-v_{\sigma,-}\right)^{2}+\left(v_{\sigma,+}^{2}-v_{\sigma,-}^{2}\right)\right).
\end{eqnarray}
As we have assumed  divergence-free flow, we therefore have
\begin{eqnarray}
\underset{\sigma \in \mathcal{E}}{\sum} q_{\sigma}\left(v_{\sigma,+}^{2}-v_{\sigma,-}^{2}\right)= \underset{A_i\in \mathcal{T}}{\sum} \left(\int_{A_i}\mathbf{q} \cdot \mathbf{n}_{i,\sigma}d\sigma \right) v_{i}^{2}=\int_{\Omega} \nabla \cdot \mathbf{q}(\mathbf{x})\,v_{h}^{2}\,(\mathbf{x})dx =0.
\end{eqnarray}
Then from \eqref{egaly}, we have
\begin{eqnarray}
\label{eq2}
 b_{h}^{2}(v_{h},v_{h})\geq 0.
\end{eqnarray}
We also have
\begin{eqnarray}
\label{eq3}
b_{h}^{3}(v_{h},v_{h})=c_{0}\Vert v_{h}\Vert_{0,h}\geq 0,
\end{eqnarray}
Combining \eqref{eq1},\eqref{eq2} and \eqref{eq3} yields
\begin{eqnarray}
a_{h}(v_{h},v_{h})\geq C_{5}\; \Vert v_{_{h}}\Vert_{1,\mathcal{T}}^{2}\;\;\;\;\; \forall v_{h} \in V_{h}.
\end{eqnarray}
So we should take $\alpha=C_{5}$.
\end{proof}

The following $ V_{h}-$ ellipticity of $a_{h}$ implies that  $-A_{h}$ is a sectorial on  $H=L^{2}(\Omega)$ (uniformly in $h$) i.e. there exists   $C_{1},\,\theta \in (\frac{1}{2}\pi,\pi)$, such that
\begin{eqnarray}
\label{dsecto}
\Vert (\lambda I +A_{h})^{-1} \Vert_{L(H)} \leq \dfrac{C_{1}}{\vert \lambda \vert },\qquad \qquad \qquad
\lambda \in S_{\theta},
\end{eqnarray}
where $S_{\theta}=\left\lbrace  \lambda \in \mathbb{C} :  \lambda=\rho e^{i \phi},\; \rho>0,\;0\leq \vert \phi\vert \leq \theta \right\rbrace $.

 The discrete operator  $-A_{h}$  therefore is  the infinitesimal generator of bounded analytic semigroup (or exponential operator)  $S_{h}(t):= e^{-t\,A_{h}}$ on $V_{h}$ such that
\begin{eqnarray}
\label{disretesemigroup}
S_{h}(t):=e^{-t\, A_{h}}=\dfrac{1}{2 \pi i}\int_{\mathcal{C}'} e^{t\,\lambda}(\lambda I +A_{h})^{-1} d \lambda\;
,\qquad  \qquad \qquad  t>0
\end{eqnarray}
where $\mathcal{C}'$  denotes a path that surrounds the spectrum of $-A_{h}$.
As for the continuous case, \thmref{th1e}  and Duhamel's principle  ensure   the existence and uniqueness of the  solution of (\ref{adrh})  represented by the following  integral equations (mild form)
\begin{eqnarray}
\label{adrd}
X_{h}(t)=S_{h}(t)X_{0 h}+ \int_{0}^{t}S_{h}(t-s)P_{h}F(X_{h}(s))ds,\qquad \qquad \ t \in \left[0,T \right].
\end{eqnarray}
The solution $X$ converges to the semi-discrete solution $X_h$ according to the following theorem.
\begin{theorem}
\label{Ft}
Let  $\mathcal{B}\subset V$ be bounded, consider the solution $X$ of \eqref{adr6} and the semi-discrete  solution $X_h$  of \eqref{adrh} represented by \eqref{fvfd} or \eqref{adrd} 
in the  interval $[0, t^{*}]$, $ t^{*}= t^{*}(\mathcal{B})$ defined in \thmref{th1e},
such that  $X(t) \in \mathcal{B}$ and $ X_h(t)\in \mathcal{B}\bigcap V_h $   for  all $t \leq T \leq  t^{*}$.
 We assume that the unique mild solution $X$  of \eqref{adr6}  is the classical solution (i.e. $X$ is twice continuously differentiable with respect 
to $\mathbf{x}$ and differentiable with respect to $t$), \assref{meshassumption} is satisfied  and the reaction 
function $F$ satisfies  \eqref{lip}. 
Furthermore assume that $X_{0} \in C(\overline{\Omega}) \bigcap \mathcal{B}$, $X_{0h} \in V_h\bigcap \mathcal{B}$  and $f(\mathbf{x},u)$ is  differentiable respect to $\mathbf{x}$ and $u$  with
\begin{eqnarray}
\label{Extraf}
 \vert f_\mathbf{x}(\mathbf{x},u)\vert +\vert f_u(\mathbf{x},u)\vert \leq C(1+\vert u\vert^{\gamma}),\; \quad \forall \; \mathbf{x} \in \Omega, \;\; u \in \mathbb{R},
\end{eqnarray}
for function of type $f$ of type \eqref{lipf} and 
\begin{eqnarray}
\label{extraff}
 \vert f_\mathbf{x}(\mathbf{x},u)\vert +\vert f_u(\mathbf{x},u)\vert \leq C(1+ \vert u\vert+ \vert u\vert^{\gamma}),\; \quad \forall \; \mathbf{x} \in \Omega, \;\; u \in \mathbb{R},
\end{eqnarray}
for function of type $f$ of type \eqref{lipff},     
then the following estimate holds
\begin{eqnarray*}
 \Vert X(t)-X_{h}(t) \Vert_{0,h} \leq  C(\mathcal{B}) (\Vert X_0- X_{0h} \Vert_{0,h} +h),\;\;\;\;\;\;\;\; \forall t \in [0, T],
\end{eqnarray*}
where $C=C( \mathcal{B},\Omega,X,F,\mathbf{D},\mathbf{q}, T,\zeta_{1}).$
\end{theorem}
  Before given the proof, let us give this lemma which will be very useful in the proof.
  
As for  the elliptic case \cite[ Proof of Theorem 2.3 or Theorem 3.8]{FV} or linear parabolic case \cite[Proof of Theorem 4.1
]{FV}, we have the following fluxes consistency.
\begin{lemma}
\label{consistency}
[\textbf{ Fluxes consistency}]\\
Let $R_{i,\sigma}(t)$ and $r_{i,\sigma}(t)$ be respectively the errors of diffusion flux and advective flux  through the the edge $\sigma$ (interface of control volume $A_i$  
and control volume $A_j$, or edge of control volume $A_i$  if $\sigma \subset \partial \Omega $) at time $t$ given by 
 \begin{eqnarray}
R_{i,\sigma}(t)=\dfrac{1}{\mathrm{mes}(\sigma)}\left[ \dfrac{\mathrm{mes}(\sigma) \mu_{\sigma}}{d(i,j)}\left(X(\mathbf{x}_i,t)-X(\mathbf{x}_j,t) \right)-\int_{\sigma} -\mathbf{D}\nabla X \cdot \mathbf{n}_{i,\sigma} d\sigma\right] ,\\
\newline\\ \nonumber
r_{i,\sigma}(t)= \dfrac{1}{\mathrm{mes}(\sigma)}\left[ q_{i,\sigma}X( \mathbf{x}_{\sigma,+},t)-\int_{\sigma} \mathbf{q} X(t)\cdot  \mathbf{n}_{i,\sigma}\right],
\end{eqnarray}
 \footnote{Remember that $X (\mathbf{x}_j,t)= X(\mathbf{x}_\sigma,t)=0$ if $\sigma \subset \partial \Omega $.} where 
\begin{eqnarray}
\left\lbrace \begin{array}{l}
\mathbf{x}_{\sigma,+}= \left\lbrace \begin{array}{l} \mathbf{x}_{i}\;\; \text{if}\; \mathbf{q} \cdot  \mathbf{n}_{\sigma}\geq 0,\\
\newline\\
\mathbf{x}_{j} \;\;\text{if}\;\;\;\mathbf{q} \cdot  \mathbf{n}_{\sigma}\, <0 ,
\end{array}\right.
\sigma= i\vert j,\\
\newline\\
\mathbf{x}_{\sigma,+}= \left\lbrace \begin{array}{l} \mathbf{x}_{i}\;\; \text{if}\;\quad \mathbf{q} \cdot  \mathbf{n}_{\sigma}\geq 0,\\
\newline\\
\mathbf{x}_{\sigma} ,\;\; \quad \mathbf{x}_{\sigma}\in \partial \Omega \;\;\text{if}\;\;\;\mathbf{q} \cdot \mathbf{n}_{\sigma} <0
\end{array}\right.
\sigma \in \mathcal{E}_{i}\cap \partial \Omega.
\end{array}\right.
\end{eqnarray}
Under the \assref{meshassumption}, if $X$ is twice continuously differentiable with respect 
to $\mathbf{x}$,  there exists three positive constants $C_{2}$,  $C_{2}'$ and $C_{3}$ such that
\begin{eqnarray}
\label{conti}
\left\lbrace \begin{array}{l}
\vert R_{i,\sigma}(t)\vert \leq C_{2}\,(\mathbf{D},X,T,\Omega)\, h,\\
\newline\\
\vert r_{i,\sigma}(t)\vert \leq C_{2}'\,(\mathbf{q},X,T,\Omega) \,h,\\
\newline\\
\vert R_{i,\sigma}(t)\vert+\vert r_{i,\sigma}(t)\vert \leq C_{3}\,( \mathbf{q},\mathbf{D},X,T \Omega)\, h.
\end{array}\right.
\end{eqnarray}
\end{lemma}
\begin{proof}
 The proof of this lemma can be done in the same manner as the one in \cite{FV} in 1 D. Let us provide some details. By setting 
 \begin{eqnarray}
  \overline{F}_{i,\sigma}=\int_{\sigma} -\mathbf{D}\nabla X \cdot \mathbf{n}_{i,\sigma} d\sigma, \,\,\,F^{*}_{i,\sigma}= \dfrac{\mathrm{mes}(\sigma) \mu_{\sigma}}{d(i,j)}\left(X(\mathbf{x}_i,t)-X(\mathbf{x}_j,t) \right)=-\tau_\sigma(X(\mathbf{x}_j,t)-X(\mathbf{x}_i,t)),
 \end{eqnarray}
let us prove that there exists $C_2 = C_2( \mathbf{D}, X,T,\Omega)$  such that
 \begin{eqnarray}
  F^{*}_{i,\sigma}=\overline{F}_{i,\sigma}+R_{i,\sigma}(t), \,\,\, \text{where}\,\,\,\, \vert R_{i,\sigma} \vert \leq C_2 \mathrm{mes}(\sigma) h.
 \end{eqnarray}
 \textbf{Case 1}: $\sigma = A_i \mid A_j \in  \mathcal{E}_{i}$.  Let  $\mathbf{y}_\sigma= \sigma\bigcap \mathcal{D}_{i,\sigma}=\sigma\bigcap \mathcal{D}_{j,\sigma}$, where
 $\mathcal{D}_{i,\sigma}$  and $\mathcal{D}_{j,\sigma}$ are respectively
 the straight lines perpendicular to $\sigma = A_i \mid A_j $ with respect to
the inner product induced by $\mathbf{D}_{A_i}^{-1}$ and $\mathbf{D}_{A_j}^{-1}$.
 Let us  set   
\begin{eqnarray}
F^{*,i}_{i,\sigma}=-D_{i,\sigma} \mathrm{mes}(\sigma) \dfrac{X(\mathbf{y}_\sigma,t)-X(\mathbf{x}_i,t)}{d_{i,\sigma}},\,\,\,
   F^{*,j}_{j,\sigma}=-D_{j,\sigma} \mathrm{mes}(\sigma) \dfrac{X(\mathbf{y}_\sigma,t)-X(\mathbf{x}_j,t)}{d_{i,\sigma}}.
 \end{eqnarray}
 Since  $X$ is twice continuously differentiable with respect 
to $\mathbf{x}$, using \assref{meshassumption} and \defref{admissiblemesh} and combined with Taylor expansion yields
 \begin{eqnarray}
 \label{feq}
  F^{*,i}_{i,\sigma}=\overline{F}_{i,\sigma}+ t_{i,\sigma}^{i}\,\,\,\,\, \text{with}\,\,\,\,\vert t_{i,\sigma}^{i}\vert \leq \alpha_1 \mathrm{mes}(\sigma) h\\
  F^{*,j}_{j,\sigma}=\overline{F}_{j,\sigma}+ t_{j,\sigma}^{j}\,\,\,\,\, \text{with}\,\,\,\,\vert t_{j,\sigma}^{j}\vert \leq \alpha_1 \mathrm{mes}(\sigma) h
 \end{eqnarray}
 First of all let us proof  the \eqref{feq}. As the restriction of  $\mathbf{D}$ to $A_i$ is differentiable, there exists a positive constant $C=C(\mathbf{D})$ such that
\begin{eqnarray}
 \mathbf{D}(\mathbf{x})= \mathbf{D}_{A_i}+\mathbf {m}_i\;\;\;\;\;\; \forall \,\mathbf{x} \in \overline{A_i},\,\,\,\, \text{with}\;\;\;\; \Vert \mathbf{m}_i \Vert_{\mathbb{R}^{d \times d}} \leq C h,
\end{eqnarray}
So,  using the fact that $X$ is dfferentiable yields
\begin{eqnarray}
\label{meand}
 \int_{\sigma} \mathbf{D}\nabla X \cdot \mathbf{n}_{i,\sigma} d\sigma = \int_{\sigma} \mathbf{D}_{A_i}\nabla X \cdot \mathbf{n}_{i,\sigma} d\sigma + M_i,\,\, \text{with}\;\;\;\; \vert M_i \vert \leq C(\Omega,T,\mathbf{D}) \mathrm{mes}(\sigma) h.
\end{eqnarray} 
As $X$ is twice continuously differentiable with respect to $\mathbf{x}$, Taylor expansion yields
\begin{eqnarray*}
 X(\mathbf{y}_\sigma,t)-X(\mathbf{x}_i,t)=\nabla X \cdot (\mathbf{y}_\sigma-\mathbf{x}_i)+\int_{0}^{1}H(X)(t \mathbf{x}_i+(1-t)\mathbf{y}_\sigma) (\mathbf{y}_\sigma-\mathbf{x}_i) \cdot(\mathbf{y}_\sigma-\mathbf{x}_i) t dt.
\end{eqnarray*}
where $H(X)(z)$ denotes the Hessian matrix of $X$ at  point $z$. Note that
\begin{eqnarray}
 \mathbf{y}_\sigma-\mathbf{x}_i=d_{i,\sigma} \mathbf{n}_{i,\sigma}^{*},
\end{eqnarray}
 where $\mathbf{n}_{i,\sigma}^{*}$ is  the  normal unit vector to $\sigma$ outward to $A_i$  with respect to
the inner product induced by $\mathbf{D}_{A_i}^{-1}$, which is different to $\mathbf{n}_{i,\sigma}$(the  normal unit vector to $\sigma$ outward to $A_i$ with  the  with respect to
the inner product of  $\mathbb{R}^{d}$). We therefore have
\begin{eqnarray}
\label{conf}
 \dfrac {X(\mathbf{y}_\sigma,t)-X(\mathbf{x}_i,t)}{d_{i,\sigma}}=\nabla X \cdot \mathbf{n}_{i,\sigma}^{*}+\frac{1}{d_{i,\sigma}}\int_{0}^{1}H(X)(t \mathbf{x}_i+(1-t)\mathbf{y}_\sigma) (\mathbf{y}_\sigma-\mathbf{x}_i) \cdot(\mathbf{y}_\sigma-\mathbf{x}_i) t dt.
\end{eqnarray}
 Note that by definition of the scalar product induced by $\mathbf{D}_{A_i}^{-1}$ (\defref{admissiblemesh}), we have
 \begin{eqnarray}
 (\mathbf{y}_\sigma-\mathbf{a})^{T} \mathbf{D}_{A_i}^{-1} \mathbf{n}_{i,\sigma}^{*}=0,\;\;\;\; \forall \mathbf{a} \in \sigma.
 \end{eqnarray}
Since
 \begin{eqnarray}
 (\mathbf{y}_\sigma-\mathbf{a})^{T} \mathbf{D}_{A_i}^{-1} (\mathbf{D}_{A_i}\mathbf{n}_{i,\sigma}) =  (\mathbf{y}_\sigma-\mathbf{a})^{T}\mathbf{n}_{i,\sigma}=0,\;\;\;\; \forall \mathbf{a} \in \sigma,
 \end{eqnarray}
 we can therefore take
 \begin{eqnarray}
  \mathbf{n}_{i,\sigma}^{*}= \dfrac{\mathbf{D}_{A_i}\mathbf{n}_{i,\sigma}}{\vert \mathbf{D}_{A_i}\mathbf{n}_{i,\sigma}\vert}.
 \end{eqnarray}
So, \eqref{conf} becomes
\begin{eqnarray}
\label{conf1}
\lefteqn{\vert \mathbf{D}_{A_i}\mathbf{n}_{i,\sigma}\vert \dfrac {X(\mathbf{y}_\sigma,t)-X(\mathbf{x}_i,t)}{d_{i,\sigma}}}\\
&=&\nabla X \cdot \mathbf{D}_{A_i}\mathbf{n}_{i,\sigma}+\dfrac{\vert \mathbf{D}_{A_i}\mathbf{n}_{i,\sigma}\vert}{d_{i,\sigma}}\int_{0}^{1}H(X)(t \mathbf{x}_i+(1-t)\mathbf{y}_\sigma) (\mathbf{y}_\sigma-\mathbf{x}_i) \cdot(\mathbf{y}_\sigma-\mathbf{x}_i) t dt.
\end{eqnarray}
Using the fact  that $\mathbf{D}_{A_i}$ is symmetric, we have 
 \begin{eqnarray}
 \label{ega}
  \mathbf{D}_{A_i} \nabla X \cdot \mathbf{n}_{i,\sigma}= (\mathbf{D}_{A_i} \nabla X )^{T} \mathbf{n}_{i,\sigma}= (\nabla X )^{T} (\mathbf{D}_{A_i})^{T}  \mathbf{n}_{i,\sigma}=  (\nabla X )^{T} \mathbf{D}_{A_i} \mathbf{n}_{i,\sigma}= \nabla X \cdot \mathbf{D}_{A_i} \mathbf{n}_{i,\sigma}.
 \end{eqnarray}
 By integrating \eqref{conf1} over $\sigma$,  using the fact  that $X$ is twice differentiable respect to $\mathbf{x}$, combining  \eqref{meand} and \eqref{ega} yields
 \begin{eqnarray}
 \label{feqn}
  F^{*,i}_{i,\sigma}=\overline{F}_{i,\sigma}+ t_{i,\sigma}^{i}\,\,\,\,\, \text{with}\,\,\,\,\vert t_{i,\sigma}^{i}\vert \leq \alpha_1 (\mathbf{D},X,T,\Omega) \mathrm{mes}(\sigma) h,
 \end{eqnarray}
 and  this conclude the proof of \eqref{feq}.
 
Let us continue with the proof of  the lemma.  The continuity of the fluxes, i.e. $\overline{F}_{i,\sigma}=-\overline{F}_{j,\sigma}$ implies
\begin{eqnarray}
 X(\mathbf{y}_\sigma,t)= -\dfrac{t_{i,\sigma}^{i}+t_{j,\sigma}^{j}}{\left(\frac{D_{i,\sigma}}{d_{i,\sigma}}+\frac{D_{j,\sigma}}{d_{j,\sigma}}\right)\mathrm{mes}(\sigma)}+\dfrac{1}{\left(\frac{D_{i,\sigma}}{d_{i,\sigma}}+\frac{D_{j,\sigma}}{d_{j,\sigma}}\right)}\left( 
 \frac{D_{i,\sigma}}{d_{i,\sigma}} X(\mathbf{x}_i,t)+\frac{D_{j,\sigma}}{d_{j,\sigma}} X(\mathbf{x}_j,t)\right).
\end{eqnarray}
We therefore have
\begin{eqnarray}
\label{feq1}
 F^{*,i}_{i,\sigma}=  \frac{D_{i,\sigma}}{d_{i,\sigma}}\dfrac{t_{i,\sigma}^{i}+t_{j,\sigma}^{j}}{\left(\frac{D_{i,\sigma}}{d_{i,\sigma}}+\frac{D_{j,\sigma}}{d_{j,\sigma}}\right)}-\tau_\sigma(X(\mathbf{x}_j,t)-X(\mathbf{x}_i,t)).
\end{eqnarray}
Combining \eqref{feq} and\eqref{feq1} yields 
\begin{eqnarray}
 \overline{F}_{i,\sigma}&=&F^{*,i}_{i,\sigma}-t_{i,\sigma}^{i}=   \frac{D_{i,\sigma}}{d_{i,\sigma}}\dfrac{t_{i,\sigma}^{i}+t_{j,\sigma}^{j}}{\left(\frac{D_{i,\sigma}}{d_{i,\sigma}}+\frac{D_{j,\sigma}}{d_{j,\sigma}}\right)}+F^{*}_{i,\sigma}-t_{i,\sigma}^{i}\\
    &=&F^{*}_{i,\sigma}-R_{i,\sigma}
 \end{eqnarray}
 where 
 \begin{eqnarray}
  -R_{i,\sigma}&=&\frac{D_{i,\sigma}}{d_{i,\sigma}}\dfrac{t_{i,\sigma}^{i}+t_{j,\sigma}^{j}}{\left(\frac{D_{i,\sigma}}{d_{i,\sigma}}+\frac{D_{j,\sigma}}{d_{j,\sigma}}\right)}-t_{i,\sigma}^{i}\\
 \vert R_{i,\sigma}\vert &\leq&  \vert t_{i,\sigma}^{i}\vert +\vert \frac{D_{i,\sigma}}{d_{i,\sigma}}\dfrac{t_{i,\sigma}^{i}+t_{j,\sigma}^{j}}{\left(\frac{D_{i,\sigma}}{d_{i,\sigma}}+\frac{D_{j,\sigma}}{d_{j,\sigma}}\right)}\vert\\
    &\leq& \vert t_{i,\sigma}^{i}\vert+ \dfrac{D_{i,\sigma}d_{j,\sigma}}{d_{j,\sigma} D_{i,\sigma}+d_{i,\sigma} D_{j,\sigma}} \left(\vert t_{i,\sigma}^{i}\vert +\vert t_{j,\sigma}^{j}\vert\right)     \\  
 &\leq& 3 \alpha_1 \mathrm{mes} (\sigma) h,
 \end{eqnarray}
since
\begin{eqnarray}
 \dfrac{D_{i,\sigma}d_{j,\sigma}}{d_{j,\sigma} D_{i,\sigma}+d_{i,\sigma} D_{j,\sigma}} \leq 1.
\end{eqnarray}
\textbf{Case 2}: $\sigma \in  \partial \Omega \bigcap \mathcal{E}_i$.  As we have in \textbf{Case 1}, since  $X$ is twice continuously differentiable with respect 
to $\mathbf{x}$, using \assref{meshassumption} and \defref{admissiblemesh} combined with Taylor expansion yields
\begin{eqnarray}
  F^{*}_{i,\sigma}=\dfrac{\mathrm{mes}(\sigma) D_{i,\sigma}}{d_{i,\sigma}}\left(X(\mathbf{x}_i,t) \right)=\tau_\sigma X(\mathbf{x}_i,t)= \overline{F}_{i,\sigma}+ R_{i,\sigma},\,\,\text{with}\,\, 
  \vert R_{i,\sigma}\vert \leq  \alpha \mathrm{mes}(\sigma) h.
 \end{eqnarray}
 To conclude the proof of the diffusion error $R_{i,\sigma}(t)$, we can take $C_2= 3 \alpha_1$.
 
 The  proof of the convection error $r_{i,\sigma}(t)$  is done  by using Tayor expansion of $X$ and $\mathbf{q}$ (since $\mathbf{q}$ is assumed to be differentiable according to \assref{meshassumption}).  
 Some details can be found in \cite{FV}.\\
 \end{proof}

Let us now prove our first main result in \thmref{Ft}.

\begin{proof}
 Integrating the shifted version (by adding $c_{0}X$ in both size) of equation (\ref{adr6}) over each control volume $ A_i \in \mathcal{T}$  
and using the divergence theorem yields
\begin{eqnarray}
\label{vdadr}
 \int_{A_i}X_{t}(\mathbf{x},t)d\mathbf{x} -\underset{\sigma \in \mathcal{E}_{i}}{\sum}\int_{\sigma}\left( \mathbf{D}\nabla X -\mathbf{q} X\right)\cdot \mathbf{n}_{\sigma} d\sigma + c_{0}\int_{A_i}X d \mathbf{x}= \int_{A_i}f(\mathbf{x},X(\mathbf{x},t))d\mathbf{x}.
\end{eqnarray}
For $t\in \left[0,T \right],\;\;A_i \in \mathcal{T}$  and $\sigma \in \mathcal{E}_{i}$ using the same notation as in \cite{FV}, let us set
\begin{eqnarray}
\label{mean}
\left\lbrace \begin{array}{l}
p_{i}(t)=X(\mathbf{x}_{i},t)-\dfrac{1}{\mathrm{mes}(A_i)}\int_{A_i}X(\mathbf{x},t)d\mathbf{x},\\
\newline\\
\varrho_{i}(t)= \dfrac{1}{\mathrm{mes}(A_i)}\int_{A_i} f(\mathbf{x},X(\mathbf{x},t))d\mathbf{x}-f(\mathbf{x}_{i},X_{i}(t)).
\end{array}\right.
\end{eqnarray}
As we have assumed that the unique solution $X$ of (\ref{adr6}) is the regular, Taylor expansion yields 
\begin{eqnarray}
\label{deri}
\left\lbrace \begin{array}{l}
 X_{t}(\mathbf{x},t)=X_{t}(\mathbf{x}_{i},t)+s_{i}(\mathbf{x},t),\quad \quad \quad \vert s_{i}(\mathbf{x},t)\vert \leq C_{1}\,(X,T)\, h\\
\newline\\
\int_{A_i}X_{t}(\mathbf{x},t)d\mathbf{x}=\mathrm{mes}(A_i)X_{t}(\mathbf{x}_{i},t)+S_{i},\quad  S_{i}=\int_{A_i}s_{i}(\mathbf{x},t)d\mathbf{x}, \quad \vert S_{i}\vert \leq \mathrm{mes}(A_i)C_{1}\,(X,T)\,h.
\end{array}\right.
\end{eqnarray}
 Using again the regularity of the solution $X$, we also have 
 $$ X(\mathbf{x},t)=X(\mathbf{x}_{i},t)+s'_{i}(\mathbf{x},t),\quad \quad \quad \vert s'_{i}(\mathbf{x},t)\vert \leq C_{1}'\,(X,T)\, h,\\ $$
 therefore
 \begin{eqnarray}
\label{conti}
\vert p_{i}(t) \vert \leq C_{3}'\,(X,T)\,h.
\end{eqnarray}

Using  the expressions \eqref{mean} and \eqref{deri} in \eqref{vdadr} yields the following decomposition  of our initial continuous  problem \eqref{vdadr}
\begin{eqnarray}
\label{vdadr1}
&&\mathrm{mes}(A_i)X_{t}(\mathbf{x}_{i},t)+S_{i} -\underset{\sigma \in \mathcal{E}_{i}}{\sum}\mathrm{mes}(\sigma) \left(R_{i,\sigma}(t)+ r_{i,\sigma}(t)\right)+\underset{\sigma \in \mathcal{E}_{i}}{\sum}\left[ \dfrac{\mathrm{mes}(\sigma) \mu_{\sigma}}{d(i,j)}\left(X(\mathbf{x}_i,t)-X(\mathbf{x}_j,t) \right)\right]
 \nonumber\\
 &&+\underset{\sigma \in \mathcal{E}_{i}}{\sum}\left[ q_{i,\sigma}X( \mathbf{x}_{\sigma,+},t)\right]
  + c_{0} \mathrm{mes}(A_i) \left(X(\mathbf{x}_i,t)-p_i(t)\right)= \int_{A_i}f(\mathbf{x},X(\mathbf{x},t))d\mathbf{x}.
\end{eqnarray}

Let $X_h(t) \in V_h$ solution of \eqref{varia1} \footnote{ also solution of \eqref{adrh} or \eqref{fvfd}} such that $X_h(t)(\mathbf{x}_i)=X_i(t)$.
Subtracting the first equation of (\ref{fvfd}) from (\ref{vdadr1}) yields
\begin{eqnarray}
\label{derr1}
\left\lbrace \begin{array}{l}
 \mathrm{mes}(A_i)\dfrac{d e_{i}(t)}{dt}+\underset{\sigma \in \mathcal{E}_{i}}{\sum}G_{i,\sigma}(t)+W_{i,\sigma}(t)+c_{0}\mathrm{mes}(A_i) e_{i}(t)
\newline\\
 \quad  \quad \quad  \quad   \quad  \quad  \quad  \quad   \quad  \quad \quad  =\int_{A_i}\left( f(\mathbf{x},X(\mathbf{x},t))-f(\mathbf{x}_{i},X_{i}(t))\right)d\mathbf{x}\\
\newline\\
\quad  \quad \quad  \quad   \quad  \quad  \quad  \quad   \quad  \quad \quad  +c_{0}\mathrm{mes}(A_i)p_{i}(t)+\underset{\sigma \in \mathcal{E}_{i}}{\sum}\mathrm{mes}(\sigma)(R_{i,\sigma}(t)+r_{i,\sigma}(t))-S_{i}(t),\quad \forall A_i\in \mathcal{T}
\end{array}\right.
\end{eqnarray}
with 
\begin{eqnarray}
\label{term}
\left\lbrace \begin{array}{l}
e_{i}(t)=X(\mathbf{x}_{i},t)-X_{i}(t)=X(\mathbf{x}_{i},t)-X_h(t)(\mathbf{x}_i),\quad \quad \quad t\in \left[ 0,T\right], \\
\newline\\
G_{i,\sigma}(t)=-\tau_{\sigma}(e_{j}(t)-e_{i}(t)),\quad \quad \sigma= i \vert j, \\
\newline\\
G_{i,\sigma}(t)=\tau_{\sigma}e_{i}(t),\quad  \sigma \in\mathcal{E}_{i}\cap \partial \Omega,\\
\newline\\
W_{i,\sigma}(t)=q_{i,\sigma}(X(\mathbf{x}_{\sigma,+},t)-X_{\sigma,+}(t)).
\end{array}\right.
\end{eqnarray}
Multipling equation (\ref{derr1}) by $e_{i}(t)$ and  summing for $A_i \in \mathcal{T}$ yields
\begin{eqnarray}
\label{derr2}
\left\lbrace \begin{array}{l}
 \underset{A_i \in \mathcal{T}}{\sum}\left[  \dfrac{\mathrm{mes}(A_i)}{2}\dfrac{d(e_{i}^{2}(t))}{dt}+\underset{\sigma \in \mathcal{E}_{i}}{\sum}e_{i}(t)(G_{i,\sigma}(t)+W_{i,\sigma}(t))+c_{0}\,\mathrm{mes}(A_i) e_{i}^{2}(t)\right] \\
\newline\\
  =\underset{A_i \in \mathcal{T}}{\sum}e_{i}(t)\left[ \int_{A_i}\left(f(\mathbf{x},X(\mathbf{x},t))-f(\mathbf{x}_{i},X_{i}(t))\right)d\mathbf{x} \right] \\
\newline\\
  +\underset{A_i \in \mathcal{T}}{\sum}\left[c_{0} \,\mathrm{mes}(A_i)p_{i}(t)e_{i}(t)+\underset{\sigma \in \mathcal{E}_{i}}{\sum}\mathrm{mes}(\sigma)e_{i}(t)(R_{i,\sigma}(t)+r_{i,\sigma}(t))-e_{i}(t)S_{i}(t)\right].
\end{array}\right.
\end{eqnarray}
Using the fact that $f$ is differentiable with respect to $X$ and $\mathbf{x}$, Taylor expansion yields
\begin{eqnarray}
\label{Rlip}
\lefteqn{ \mathrm{mes}(A_i)\varrho_{i}(t)} \nonumber\\
&=& \int_{A_i} f(\mathbf{x},X(\mathbf{x},t))d\mathbf{x}-\mathrm{mes}(A_i) f(\mathbf{x}_{i},X_i(t))\nonumber\\
   & =& \int_{A_i}\left( f(\mathbf{x},X(\mathbf{x},t))-f(\mathbf{x}_{i},X_i(t))\right)d\mathbf{x}, \nonumber\\
& =& \mathrm{mes}(A_i)\left(f(\mathbf{x}_{i},X(\mathbf{x}_{i},t))- f(\mathbf{x}_{i},X_i(t))\right)+ \int_{A_i} Z_{1}(\mathbf{x},t)(X(\mathbf{x},t)-X(\mathbf{x}_{i},t))d\mathbf{x} \nonumber\\
&& +\int_{A_i}Z_{2}(\mathbf{x},t)(\mathbf{x}-\mathbf{x}_{i})d\mathbf{x} \nonumber\\
&=& \mathrm{mes}(A_i)\left( f(\mathbf{x}_{i},X(\mathbf{x}_{i},t))- f(\mathbf{x}_{i},X_i(t))\right)+\kappa (\mathbf{x}_{i},X,f),
\end{eqnarray}
where 
\begin{eqnarray*}
\kappa (\mathbf{x}_{i},X,f)= \int_{A_i} Z_{1}(\mathbf{x},t)(X(\mathbf{x},t)-X(\mathbf{x}_{i},t))d\mathbf{x}+\int_{A_i}Z_{2}(\mathbf{x},t)(\mathbf{x}-\mathbf{x}_{i})d\mathbf{x}\\
\newline\\
 Z_{1}(\mathbf{x},t)= \int_{0}^{1}\dfrac{\partial f}{\partial X}\left(\mathbf{x}_{i}+\tau(\mathbf{x}-\mathbf{x}_{i}),X(\mathbf{x}_{i},t)+\tau (X(\mathbf{x},t)-X(\mathbf{x}_{i},t))\right) d\tau\\
\newline\\
Z_{2}(\mathbf{x},t)= \int_{0}^{1}\dfrac{\partial f}{\partial \mathbf{x}}\left(\mathbf{x}_{i}+\tau(\mathbf{x}-\mathbf{x}_{i}),X(\mathbf{x}_{i},t)+\tau (X(\mathbf{x},t)-X(\mathbf{x}_{i},t))\right) d\tau.\\
\end{eqnarray*}
As we have assumed that the solution $X(t) \in \mathcal{B}$ is differentiable with respect  $\mathbf{x}$ and  $f$ differentiable  with respect to the two variables  with derivatives satisfying 
\eqref{Extraf} or \eqref{extraff}, one more Taylor expansion  yields 
$$
\vert \kappa (\mathbf{x}_{i},t,X,f)\vert \leq  \mathrm{mes}(A_i) C_{4}(\mathcal{B},\Omega,f,T,X)\,h.
$$
Using \eqref{lipf}, \eqref{lipff} or \propref{alip}, and the fact that $X(t) \in \mathcal{B}$ and $X_h(t) \in \mathcal{B}$ allow to have
\begin{eqnarray}
\label{react}
  \mathrm{mes}(A_i)\varrho_{i}(t)\leq \mathrm{mes}(A_i)\left( C_{4}'( \mathcal{B},\Omega,f,T,X)\vert X(\mathbf{x}_{i},t)-X_{i}(t)\vert+ C_{4}( \mathcal{B}, \Omega,T,X)\,h \right). 
\end{eqnarray}
Let $e_{h}(t) \in V_h$  a piecewise constant function defined by
\begin{eqnarray}
 e_{h}(t)(\mathbf{x}_{i})=e_{i}(t)= X(\mathbf{x}_i,t)-X_h(t)(\mathbf{x}_i) \quad A_i\in\mathcal{T},\quad  t\in \left[0,T \right].
\end{eqnarray}

Since  

\begin{eqnarray}
 \tau_\sigma=\dfrac{\mathrm{mes}}{d_\sigma} \mu_\sigma.
\end{eqnarray}
Using \eqref{term}, reordering the summation and  the fact that the  transmissibility is symmetric, i.e.  $\tau_{i\vert j}=\tau_{j\vert i}$, we have
\begin{eqnarray}
\Vert e_{h}(t)\Vert_{1,h}^{2}&:=&\underset{A_i \in \mathcal{T}}{\sum}\underset{\sigma \in \mathcal{E}_{i}}{\sum} e_{i}(t)G_{i,\sigma}(t) \nonumber,\\
                       &=& \underset{\sigma \in \mathcal{E}}{\sum} \vert D_{\sigma}e_{h}(t)\vert^{2}\dfrac{\mathrm{mes}(\sigma) \mu_\sigma}{d_{\sigma}}.  
\end{eqnarray}
Note  that  $\mu_{\sigma}$ is defined in \eqref{transn11} and \eqref{transn1}.
As in the proof of Theorem \ref{coert}, using the regularity of  the mesh $\mathcal{T}$ ($\zeta_1 h \leq d_{i,\sigma} \leq h$), we have
\begin{eqnarray}
 C_{5}\leq\mu_{\sigma}\,\leq \,C_{5}'\,,\quad \quad \forall \sigma \in \mathcal{E}.
\end{eqnarray}
 
So
\begin{eqnarray}
\label{flux1}
C_{5}\Vert e_{h}(t)\Vert_{1,\mathcal{T}}^{2}\leq \Vert e_{h}(t)\Vert_{1,h}^{2}\leq C_{5}'\,\Vert e_{h}(t)\Vert_{1,\mathcal{T}}^{2}.
\end{eqnarray}
Note that
\begin{eqnarray}
\left\lbrace \begin{array}{l}
\vert D_{\sigma}e_{h}(t)\vert =\vert e_{i}(t)-e_{j}(t)\vert,\quad \quad \text{if}\quad \quad  \sigma= i\vert j,\\
\newline\\
\vert D_{\sigma}e_{h}(t)\vert =\vert e_{i}(t)\vert,\quad \quad \text{if}\quad \quad  \sigma \in \mathcal{E}_{i}\cap \partial \Omega.\\
\end{array}\right.
\end{eqnarray}
 Setting $e_{\sigma,+}(t)=X(\mathbf{x}_{\sigma,+},t)-X_{\sigma,+}(t),
$
as in the proof of Theorem \ref{coert},  using the same technique as in   bilinear form $b_h^2(. )$ (see \eqref{egaly1}-\eqref{eq2}) yields 
\begin{eqnarray}
\label{flux2}
 \underset{A_i \in \mathcal{T}}{\sum}\underset{\sigma \in \mathcal{E}_{i}}{\sum}e_{i}(t)W_{i,\sigma}(t)&=& \underset{i \in \mathcal{T}}{\sum}\underset{\sigma \in \mathcal{E}_{i}}{\sum} q_{i,\sigma}e_{i}(t)(X(\mathbf{x}_{\sigma,+},t)-X_{\sigma,+}(t)) \nonumber\\
               &=&\underset{A_i \in \mathcal{T}}{\sum}\underset{\sigma \in \mathcal{E}_{i}}{\sum} q_{i,\sigma}e_{i}(t)e_{\sigma,+}(t) 
\geq 0.
\end{eqnarray}
Using (\ref{flux2}) and \eqref{react} in the expression (\ref{derr2}) yields
\begin{eqnarray}
\label{derr3}
\left\lbrace \begin{array}{l}
\dfrac{1}{2}\underset{A_i \in \mathcal{T}}{\sum}  \mathrm{mes}(A_i)\dfrac{d(e_{i}^{2}(t))}{dt}+\Vert e_{h}(t)\Vert_{1,h}^{2}+c_{0}\Vert e_{h}(t)\Vert_{0,h}^{2}\leqslant C_{4}'(\mathcal{B})\Vert e_{h}(t)\Vert_{0,h}^{2} \\
\newline\\
+C_{4}(\mathcal{B})\,h\,\underset{A_i \in \mathcal{T}}{\sum}\mathrm{mes}(A_i) \vert e_{i}(t)\vert

 + c_{0} C_{3}'\,h\,\underset{A_i\in \mathcal{T}}{\sum} \mathrm{mes}(A_i)\vert e_{i}(t)\vert\\
 \newline\\
 +\underset{A_i\in \mathcal{T}}{\sum}\underset{\sigma \in \mathcal{E}_{i}}{\sum}\mathrm{mes}(\sigma)e_{i}(t)(R_{i,\sigma}(t)+r_{i,\sigma}(t))+ C_{1}\,h\,\underset{A_i\in \mathcal{T}}{\sum}\mathrm{mes}(A_i)\vert e_{i}(t)\vert.
\end{array}\right.
\end{eqnarray}
The   continuity of the diffusion and advection flux at each interface  yields
$$R_{i,\sigma}(t)=-R_{j,\sigma}(t),\;\;\;\quad\;r_{i,\sigma}(t)=-r_{j,\sigma}(t),\quad \quad \text{for}\; \sigma= i\vert j \in \mathcal{E}_{int}.$$
Set $$R_{\sigma}(t)=\vert R_{i,\sigma}(t)\vert,\; \; r_{\sigma}(t)=\vert R_{i,\sigma}(t)\vert,\;\quad \quad A_i \in \mathcal{T},\quad \quad \sigma
\in \mathcal{E}_{int}.$$
Using  the Cauchy-Schwarz inequality  as in \cite{FV} for stationary elliptic problems, and
  reordering the summation over the edges  yields
\begin{eqnarray*}
\label{derr44}
\lefteqn{\underset{A_i\in \mathcal{T}}{\sum}\underset{\sigma \in \mathcal{E}_{i}}{\sum}\mathrm{mes}(\sigma)e_{i}(t)(R_{i,\sigma}(t)+r_{i,\sigma}(t))}&&\\
&\leqslant& \underset{\sigma \in \mathcal{E}}{\sum}
\mathrm{mes}(\sigma)D_{\sigma} e_{h}(t)(R_{\sigma}(t)+r_{\sigma}(t))\\
&\leqslant& \left( 
\underset{\sigma \in \mathcal{E}}{\sum}\dfrac{\mathrm{mes}(\sigma)}{d_{\sigma}}(D_{\sigma}e_{h}(t))^{2}\right) ^{\frac{1}{2}}\left(\underset{\sigma \in \mathcal{E}} {\sum} \mathrm{mes}(\sigma) d_{\sigma}(R_{\sigma}+r_{\sigma})^{2} \right)^{\frac{1}{2}},
\end{eqnarray*}
where $d_\sigma=d_{i,\sigma}$, for $\sigma \in \mathcal{E}_{i}$.
 Using the fact that $\;\underset{\sigma \in \mathcal{E}}{\sum}\mathrm{mes}(\sigma) d_{\sigma}\leqslant d \;\mathrm{mes}(\Omega) $, \lemref{consistency} and  relation \eqref{flux1} yields
\begin{eqnarray}
 \label{derr4}
\lefteqn{\underset{A_i\in \mathcal{T}}{\sum}\underset{\sigma \in \mathcal{E}_{i}}{\sum}\mathrm{mes}(\sigma)e_{i}(t)(R_{i,\sigma}(t)+r_{i,\sigma}(t))}&& \nonumber\\
&\leqslant& C_{3}\,h\,(\mathrm{mes}(\Omega)\,d)^{\frac{1}{2}}\Vert e_{h}(t)\Vert_{1,\mathcal{T}} \nonumber\\
&\leqslant& (C_{5})^{-1}C_{3}\,h\,(\mathrm{mes}(\Omega)\,d)^{\frac{1}{2}}\Vert e_{h}(t)\Vert_{1,h}.
\end{eqnarray}
For an arbitrary constant $C>0$, Young's inequality implies that
\begin{eqnarray}
\label{Yp}
 \left\lbrace \begin{array}{l}
\vert C\,h\,\underset{A_i \in \mathcal{T}}{\sum} \mathrm{mes}(A_i) e_{i}(t)\vert=\vert \underset{A_i \in \mathcal{T}}{\sum} (C\;h\; \mathrm{mes}(A_i)^{\frac{1}{2}}) (\mathrm{mes}(A_i)^{\frac{1}{2}} e_{i}(t)) \vert \\
\newline\\
\qquad \qquad \qquad \qquad \qquad\leqslant\dfrac{1}{2}\Vert e_{h}(t)\Vert_{0,h}^{2}+\dfrac{1}{2} C^{2} h^{2}\;\mathrm{mes}(\Omega)\\
\newline\\
C\,h\Vert e_{h}(t)\Vert_{1,h}\leqslant \dfrac{1}{2}C^{2} h^{2}+ \dfrac{1}{2} \Vert e_{h}(t)\Vert_{1,h}^{2}.
\end{array}\right.
\end{eqnarray}
Using expression (\ref{derr4}) and (\ref{Yp}) in expression (\ref{derr3}) yields
\begin{eqnarray}
 \label{derr5}
\left\lbrace \begin{array}{l}
\dfrac{1}{2}\left[ \underset{A_i \in \mathcal{T}}{\sum}  \mathrm{mes}(A_i)\dfrac{d(e_{i}^{2}(t))}{dt}+\Vert e_{h}(t)\Vert_{1,h}^{2}+2c_{0}\,\Vert e_{h}(t)\Vert_{0,h}^{2}\right] \leqslant (C_{7}\Vert e_{h}(t)\Vert_{0,h}^{2}+ C_{6}\,h^{2}\\
\newline\\
C_{6}=C_{6}(c_0,C_{1}, C_{3},C_{3}',C_{4},C_{5}), C_7=C_7(C_4').
\end{array}\right.
\end{eqnarray}
Bounding the left hand side of expression (\ref{derr5}) below yields 
\begin{eqnarray}
\label{derr6}
 \underset{A_i\in \mathcal{T}}{\sum}  \mathrm{mes}(A_i)\dfrac{d(e_{i}^{2}(s))}{ds}\leqslant 2C_{7}\Vert e_{h}(s)\Vert_{0,h}^{2}+ 2 C_{6}\,h^{2}, \qquad \forall s\in [0,T].
\end{eqnarray}
Integrating both sides of expression (\ref{derr6}) through interval $\left[0,t \right],\; 0\leq t\leq T $ yields
\begin{eqnarray}
\label{derr7}
 \Vert e_{h}(t)\Vert_{0,h}^{2}\leq \Vert e_{h}(0)\Vert_{0,h}^{2} +2 \,C_{6}\,T \,h^{2}+2C_{7}\int_{0}^{t}\Vert e_{h}(s)\Vert_{0,h}^{2}ds, \qquad \forall t \in [0,T].
\end{eqnarray}
Applying  the discrete Gronwall  yields
\begin{eqnarray}
\label{derr8}
\Vert e_{h}(t)\Vert_{0,h}^{2}&\leq& C \left( \Vert e_{h}(0)\Vert_{0,h}^{2}+ h^{2}\right),\\
\newline \nonumber\\
C&=&C(\mathcal{B},\Omega,X,F,\mathbf{D},\mathbf{q}, T,\zeta_{1}). \nonumber
\end{eqnarray}
%
\end{proof}

\section{Full discretization and main result}
\label{sec:etd}
\subsection{Exponential Euler method  for time discretization}
For simplicity we consider  a constant time-step $\Dt>0$.
At time $t_{m}=m \Delta t \in [ 0,T], $ the mild solution (\ref{adrd}) 
is given by
\begin{eqnarray}
\label{mmild}
X_{h}(t_{m})= S_{h}(t_{m})X_{0h}+ \int_{0}^{t_{m}} S_{h}(t_{m}-s)P_{h}F(X_{h}(s))ds.
\end{eqnarray}
 Then, given the solution $X_{h}$ at the
time $t_{m}$, we can construct the corresponding solution at $t_{m+1}$
as
\begin{eqnarray}
\label{exactnu}
X_{h}(t_{m+1})= S_{h}(\Dt)X_{h}(t_{m})
+\int_{0}^{\Dt}
 S_{h}(\Dt-s)P_{h}F(X_{h}(t_{m}+s))d s.
\end{eqnarray}
Note that the expression in \eqref{exactnu} is still an exact
form of $X_{h}$. The idea behind exponential time differencing is to
approximate $P_{h}F(X_{h}(t_{m}+s))$ by a
suitable polynomial~\cite{MC,Tr}. We consider the simplest case
where $P_{h}F(X_{h}(t_{m}+s))$ is
approximated by the constant $P_{h}F(X_{h}(t_{m}))$ and the corresponding  scheme (ETD1) is given by
\begin{equation}
\label{etd}
X _{h}^{n+1}  = e^{-\Dt A_{h}}
X_{h}^{n}+\Dt\varphi_{1} (-\Dt A_{h})P_{h}F(X_{h}^{m})
\end{equation}
 where
 $$\varphi_{1}(-\Delta t A_{h})=(-\Delta t\, A_{h})^{-1}\left( e^{-\Delta
    t A_{h}}-I\right)= \frac{1}{\Delta t}\int_{0}^{\Delta t}
e^{-(\Delta t- s)A_{h}}ds. $$ 
Note that the ETD1 scheme in (\ref{etd})  can be rewritten as
\begin{eqnarray}
\label{etd1}
X_{h}^{m+1}  =  X_{h}^{m}+\Dt\varphi_{1} (-\Dt A_{h})(-A_{h}X_{h}^{m}+P_{h}F(X_{h}^{m})).
\end{eqnarray}
This new expression has the advantage that it is  computationally more
efficient as only one matrix exponential function needs to be
evaluated at each step. 
%
\subsection{Main result}
\label{sec:th1}
To achieve the optimal orders of convergence in time and space, the solution $X$ need to be regular. 
\begin{theorem}
\label{T1}

Let  $\mathcal{B}\subset V$ be bounded, consider the solution $X$ of \eqref{adr6} and $X_{h}^{m}$ the numerical solution (\ref{etd1}) 
given by combining the finite volume method in space discretization and  ETD1 scheme in time integration. Assume that
$X(t_{k}) \in \mathcal{B}$, $X_{h}^{k} \in \mathcal{B}\bigcap  V_h $ and  $ X_h(t_k)\footnote{This is the solution of \eqref{fvfd} or \eqref{adrd}}\in \mathcal{B}\bigcap V_h $
for  all $t_{k}= k \Delta t \leq T \leq  t^{*},\, k\in \mathbb{N}$.
 We aslo assume that the unique mild solution $X$  of \eqref{adr6}  is the classical solution (i.e. $X$ is twice continuously differentiable with respect 
to $\mathbf{x}$ and differentiable with respect to $t$), \assref{meshassumption} is satisfied  and the reaction 
function $F$ satisfies  \eqref{lip}. 
Furthermore assume that $X_{0} \in \mathcal{D}(A)\bigcap \mathcal{B}$, $X_{0h} \in V_h\bigcap \mathcal{B}$  and $f(\mathbf{x},u)$ is  differentiable respect to $\mathbf{x}$ and $u$  with
\eqref{Extraf} or \eqref{extraff}, then the following estimate holds
\begin{eqnarray*}
 \Vert X(t_{m})-X_{h}^{m} \Vert_{0,h} \leq  C\left(\Vert X_0- X_{0h} \Vert_{0,h}+\Delta t+h\right),
\end{eqnarray*}
where $C=C( \mathcal{B},\Omega,X,F,\mathbf{D},\mathbf{q}, T,\zeta_{1}).$
\end{theorem}
Before give the proof, let us provide two important results.
\begin{lemma}
\label{eqnorm}
[\textbf{Norms equivalence}]\\
Consider the discrete $L^2(\Omega)$ norm  $\Vert.\Vert_{0,h}$ associated to the discrete scalar product \eqref{dproduct}or \eqref{dproductL2} and the discrete norm  $\Vert.\Vert_{0,H}$ 
defined  with the dual mesh $\mathcal{T}_h$ such that  for $v\in C(\Omega)$ 
\begin{eqnarray}
 \Vert v\Vert_{0,H}=\sqrt{\underset{K\in \mathcal{T}_h}{\sum} h_K^{d} \underset{\mathbf{x}_i\in K}{\sum}v(\mathbf{x}_i)^{2}}.
\end{eqnarray}
Assume that $\mathcal{T}$ is regular (\assref{meshassumption} is satisfied, so $\mathcal{T}_h$ is regular according to Remark \ref{regularduall}), the norms $\Vert.\Vert_{0,h}$ ,$\Vert.\Vert_{0,H}$, and the $ L^2(\Omega)$ norm
$\Vert.\Vert$ are equivalent in $V_h$ uniformly with respect to h.
\end{lemma}
\begin{proof}
See  \cite[Remark 6.16, p. 275]{EP} and \cite[Theorem 3.43, p. 163]{EP}.
\end{proof}
\begin{proposition}
\label{prop111}
$\left[\textbf{Interpolation error}\right]$

Let $ \mathcal{T}$ be an admissible mesh in the sense of \defref{admissiblemesh} and $\mathcal{T}_{h}$ its dual Delaunay triangulation 
(remember that $\left\lbrace \mathbf{x}_{i}\right\rbrace  $ are vertices of $\mathcal{T}_{h}$ and centers of $\mathcal{T}$).  
Let $I_{h}: C(\overline{\Omega})\rightarrow V_{h}$ defined by
\begin{eqnarray}
 I_{h}(u)=\underset{ i\in \mathcal{T}}{\sum} u(\mathbf{x}_{i})\varphi_{\mathbf{x}_{i}}, \qquad\qquad u \in C(\overline{\Omega})\
\end{eqnarray}
where $\left\lbrace\varphi_{\mathbf{x}_{i}}\right\rbrace_{i\in \mathcal{T}}$ is the nodal basis corresponding to $\left\lbrace \mathbf{x}_{i}\right\rbrace_{i\in \mathcal{T}}$ in the sense of finite element method $(\varphi_{x_{i}}(x_{j})=\delta_{i,j})$.
 Let $X$ the  solution of \eqref{adr6} given by \eqref{mild}.
 If $ X(t)\in H^{2}(\Omega)$, then there exists a positive constant $C_{0}>0$ independent  of $X$ and $t$ such that the following estimate holds
\begin{eqnarray}
\label{ineq1}
 \Vert X(t)- I_{h}(X(t))\Vert \leq C_{0} \vert X(t)\vert_{2}\, h^{2},
\end{eqnarray}
where $ \vert .\vert_2$ denotes  the semi norm of $ H^{2}(\Omega)$ \footnote{Note that this semi norm uses only second order derivatives which belong to $L^{2}(\Omega)$.}.
Furthermore,  if  $X\in C([0,T], H^{2}(\Omega))$,  there exists  $C_{0}=C_{0}(X,T)$ such that 
\begin{eqnarray}
\label{ineq2}
\Vert X(t)- I_{h}(X(t))\Vert \leq C_{0}(X,T) h^{2}, \qquad \qquad \forall t \in [0,T].
\end{eqnarray}
\begin{proof}
 For  \eqref{ineq1}, see \cite [Section 3.4, Theorem 3.29, page 138]{EP} or \cite[Theorem 17.1, page 132]{lions} with $k=1$ and $m=0$. Recall that
 $ C([0,T], H^{2}(\Omega))$ is the set of  continuous functions $ v: [0,T]\rightarrow H^{2}(\Omega)$ such that $\underset{t\in[0,T]}{\sup} \Vert v (t) \Vert_{2}< \infty$.
 To have  \eqref{ineq2}, we obviously have
 \begin{eqnarray}
\label{ineq22}
\Vert X(t)- I_{h}(X(t))\Vert \leq C_{0} \vert X(t)\vert_{2}\, h^{2} \leq    C_{0} \underset{t\in [0,T]}{\sup} \Vert X (t) \Vert_{2}  h^{2} = C_{0}(X,T) h^{2}, \quad \forall t \in [0,T].
\end{eqnarray}
\end{proof}
\end{proposition}
As the preparatory results are provided, let us proof our main result (\thmref{T1}). 

\begin{proof}
We use the equivalence of the norms $\Vert.\Vert $ and $\Vert.\Vert_{0,h}$ in $V_{h}$ as  we have assumed that the mesh $\mathcal{T}$ is regular (see \lemref{eqnorm}).
 Using the triangle inequality yields
\begin{eqnarray}
 \Vert X(t_{m})-X_{h}^{m}\Vert_{0,h} 
 &\leq& \Vert X(t_{m})-X_{h}(t_{m})\Vert_{0,h} + \Vert X_{h}(t_{m})-X_{h}^{m}\Vert_{0,h} \nonumber\\
  &=& I +II.
\end{eqnarray}

As  $I$ is already estimated in \thmref{Ft}, let us estimate $II$. From  \eqref{mmild} and \eqref{etd}, we have
\begin{eqnarray}
  X_{h}(t_{m})=S_{h}(t_{m}) X_{0h}+\underset{k=0}{\sum^{m-1}}\int_{t_{k}}^{t_{k+1}} S_{h}(t_{m}-s)P_{h}F(X_{h}(s))ds,
\end{eqnarray}
and 
\begin{eqnarray}
 X_{h}^{m}=S_{h}(t_{m}) X_{0h}+\underset{k=0}{\sum^{m-1}}\int_{t_{k}}^{t_{k+1}} S_{h}(t_{m}-s)P_{h}F(X_{h}^{k})ds.
\end{eqnarray}
The smoothing properties of the semigroup $S_{h}$ in \propref{prop1}
and the equivalence $\Vert.\Vert \equiv \Vert.\Vert_{0,h}$ in $V_{h}$ yields
\begin{eqnarray}
 \Vert X_{h}(t_{m})- X_{h}^{m} \Vert_{0,h}&\equiv& \Vert X_{h}(t_{m})- X_{h}^{m} \Vert\nonumber \\
       &\leq& \underset{k=0}{\sum^{m-1}}\int_{t_{k}}^{t_{k+1}} \Vert  S_{h}(t_{m}-s)P_{h} \left(F(X_{h}(s))-F(X_{h}^{k})\right)\Vert ds.  \nonumber
 \end{eqnarray}
 Following \cite{Stig} we should prove  that
\begin{eqnarray}
\label{keyd}
 \Vert A_h^{-1/2}P_{h} v\Vert \leq  Ch \Vert v\Vert +\Vert v\Vert_{-1},\qquad  \forall v \in L^2(\Omega).
\end{eqnarray}
From \cite[(18)]{florin} it is well known that 
\begin{eqnarray}
\label{scadif}
 \vert (u,v)-\langle u,v\rangle_{0,h}\vert \leq Ch^{s+p}\Vert u \Vert_s \Vert v \Vert_p,\quad \forall  u \in H^s(\Omega),\, v\in H^p(\Omega),\qquad s,p \in \{0,1\}.
\end{eqnarray}

Indeed identifying  $L^2(\Omega)$ to its dual, as $A_h^{*-1/2}$ is uniformly bounded, using \eqref{scadif} and  the definition of  the projection $P_h$ yields
{\small {
\begin{eqnarray}
 \Vert A_h^{-1/2}P_{h} v\Vert &=& \underset{u_h \in V_h}{\sup} \dfrac{\vert (A_h^{-1/2}P_{h} v,u_h)\vert}{\Vert u_h \Vert_1} \nonumber\\
    &=& \underset{u_h \in V_h}{\sup} \dfrac{\vert (P_{h}v,A_h^{*\,-1/2} u_h) \vert}{\Vert u_h \Vert_1}\nonumber\\
    &\leq & C_1 h \Vert v \Vert+  \underset{u_h \in V_h}{\sup} \dfrac{\vert \langle P_{h}v,A_h^{*\,-1/2} u_h\rangle_{0,h} \vert}{\Vert u_h \Vert_1}\nonumber\\
    &= & C_1 h \Vert v \Vert+  \underset{u_h \in V_h}{\sup} \dfrac{\vert \langle v,A_h^{*\,-1/2} u_h\rangle_{0,h} \vert}{\Vert u_h \Vert_1}\nonumber\\
    &\leq & C h \Vert v \Vert+ \underset{w_h \in V_h}{\sup} \dfrac{\vert ( v, w_h)\vert}{\Vert A_h^{*1/2} w_h \Vert_1}\nonumber\\
    &\leq & C h \Vert v \Vert + \Vert v\Vert_{-1}.
 \end{eqnarray}
 }}
Comparing  with the results in \cite{Stig}, the estimation \eqref{keyd} clearly shows the difference between the finite element method 
 and the finite element method  while performing the space discretization of problem of type \eqref{adr}. 
 
 Let us back in our main proof.  Using  \eqref{keyd}, \propref{prop1}, the  Lipschitz conditions \eqref{lip} and \eqref{lip1} and  the fact  that  both  the full discrete  and semi discrete solutions are in
 $\mathcal{B}$ allows to have
 \begin{eqnarray}
 \lefteqn{\Vert X_{h}(t_{m})- X_{h}^{m} \Vert_{0,h}} \nonumber\\
       &\leq& \underset{k=0}{\sum^{m-1}}\int_{t_{k}}^{t_{k+1}} \Vert  S_{h}(t_{m}-s)A_h^{1/2} A_h^{-1/2}P_{h} \left(F(X_{h}(s))-F(X_{h}^{k})\right)\Vert ds,  \nonumber\\
       &\leq& \underset{k=0}{\sum^{m-1}}\int_{t_{k}}^{t_{k+1}} (t_m-s)^{-1/2} \left(C h \Vert F(X_{h}(s))-F(X_{h}^{k}) \Vert+ \Vert F(X_{h}(s))-F(X_{h}^{k})\Vert_{-1}\right) ds \nonumber\\
       &\leq& \underset{k=0}{\sum^{m-1}}\int_{t_{k}}^{t_{k+1}} (t_m-s)^{-1/2} \left(C h \Vert X_{h}(s)-X_{h}^{k} \Vert_1+ \Vert X_{h}(s)-X_{h}^{k}\Vert\right) ds\nonumber\\
        &\leq& C(\mathcal{B})h+ \underset{k=0}{\sum^{m-1}}\int_{t_{k}}^{t_{k+1}} (t_m-s)^{-1/2} \left(\Vert X_{h}(s)-X_{h}^{k}\Vert \right) ds\nonumber\\
        &\leq& C(\mathcal{B}) h+  C(\mathcal{B})\underset{k=0}{\sum^{m-1}}\int_{t_{k}}^{t_{k+1}} (t_m-s)^{-1/2} \left(\Vert X_{h}(s)-X(s) \Vert \right) ds\nonumber\\
         &+& C(\mathcal{B})\underset{k=0}{\sum^{m-1}}\int_{t_{k}}^{t_{k+1}} (t_m-s)^{-1/2} \left(\Vert X(s)-X_{h}^{k}\Vert \right) ds \nonumber\\
         &=& C(\mathcal{B})\left( h +II_{1}+II_{2}\right). 
 \end{eqnarray}
For $s \in [0,T]$, as the solution $X$ is  assumed to be regular, \propref{prop111}  yields
\begin{eqnarray}
   \Vert X_{h}(s)-X(s)\Vert 
          &\leq& \Vert X_{h}(s)-I_{h}(X(s))+ I_{h}(X(s))-X(s) \Vert\nonumber\\
          &\leq& \left(\Vert X_{h}(s)-I_{h}(X(s))\Vert + \Vert I_{h}(X(s))-X(s)\Vert\right)\nonumber\\
         &\leq& \left(\Vert X_{h}(s)-I_{h}(X(s))\Vert + C_{0}(X,T) h^{2} \right).
\label{errorii}
\end{eqnarray}
Since $X_{h}(s)-I_{h}(X(s)) \in V_{h}$, the equivalence $\Vert.\Vert \equiv \Vert .\Vert_{0,h}$ and the uniform estimate of the term $I$ in $[0,T]$ yields
\begin{eqnarray}
 \Vert X_{h}(s)-I_{h}(X(s))\Vert &\equiv&
     \Vert X_{h}(s)-I_{h}(X(s))\Vert_{0,h} \nonumber\\
     &= & \Vert  X_{h}(s)- X(s) \Vert_{0,h} \quad \quad(\text{by definition of}\; \Vert. \Vert_{0,h},\, \text{and}\,\, I_h)\nonumber\\ 
       &\leq & C(\mathcal{B},\Omega,X,F,\mathbf{D},\mathbf{q},\zeta_{1}) (\Vert X_{0h}-X_0\Vert_{0,h}+h),
\label{errori}
\end{eqnarray}
which yields
\begin{eqnarray}
 II_{1} &\leq&  C(\mathcal{B}) \left(\Vert X_{0h}-X_0\Vert_{0,h}+h\right)\underset{k=0}{\sum^{m-1}}\int_{t_{k}}^{t_{k+1}}  (t_{m}-s)^{-1/2}ds + C_{0}(X,T) h^{2}\\
   &\leq&  C(\mathcal{B}) (\Vert X_{0h}-X_0\Vert_{0,h}+h) \left( \underset{k=0}{\sum^{m-1}}\int_{0}^{T}  (t_{m}-s)^{-1/2} ds\right)  + C_{0}(X,T) h^{2} \\
   &\leq & C(\mathcal{B},\Omega,X,F,\mathbf{D},\mathbf{q}, T,\zeta_{1})\left(\Vert X_{0h}-X_0\Vert_{0,h}+h\right).   
\end{eqnarray}
We also have
\begin{eqnarray}
 II_{2} &=&\underset{k=0}{\sum^{m-1}}\int_{t_{k}}^{t_{k+1}}(t_m-s)^{-1/2} \Vert X(s)- X_{h}^{k}\Vert ds\nonumber \\
  &\leq&  \underset{k=0}{\sum^{m-1}}\int_{t_{k}}^{t_{k+1}}(t_m-s)^{-1/2} \Vert X(s)- X(t_{k})\Vert ds \nonumber \\
   &+&  \underset{k=0}{\sum^{m-1}}\int_{t_{k}}^{t_{k+1}} (t_m-s)^{-1/2} \Vert X(t_{k})- X_{h}^{k}\Vert ds\nonumber \\
       &=&II_{2}^{1}+II_{2}^{2}.
\end{eqnarray}
Using Lemma \ref{lemmad1}  yields 
\begin{eqnarray}
 II_{2}^{1} &\leq& C(\mathcal{B})\underset{k=0}{\sum^{m-1}}\int_{t_{k}}^{t_{k+1}}(t_m-s)^{-1/2}(s-t_{k}) ds\nonumber\\
             &\leq& C(\mathcal{B}) \Dt\underset{k=0}{\sum^{m-1}}\int_{t_{k}}^{t_{k+1}}(t_{m}-s)^{-1/2}ds\nonumber\\
             &\leq& C(\mathcal{B})\Dt \int_{0}^{T}(t_{m}-s)^{-1/2}ds \nonumber\\
             &\leq& C(\mathcal{B},T) \Dt.
\end{eqnarray}
 Using the equivalence $\Vert.\Vert\equiv \Vert.\Vert_{0,h}$
in $V_{h}$ (\lemref{eqnorm}) allow to have
\begin{eqnarray}
 II_{2}^{2}&\leq & \underset{k=0}{\sum^{m-1}}\int_{t_{k}}^{t_{k+1}}(t_m-s)^{-1/2}\Vert X(t_{k})- I_{h}(X(t_{k}))+I_{h}(X(t_{k}))- X_{h}^{k}\Vert ds\\
&\leq &  \underset{k=0}{\sum^{m-1}}\int_{t_{k}}^{t_{k+1}} (t_m-s)^{-1/2}\Vert X(t_{k})- I_{h}(X(t_{k}))\Vert +\Vert I_{h}(X(t_{k}))- X_{h}^{k}\Vert_{0,h} ds\\
             &\leq &C(X,T) \left( h^{2} + \underset{k=0}{\sum^{m-1}}\int_{t_{k}}^{t_{k+1}}(t_m-s)^{-1/2}\Vert X(t_{k})-X_{h}^{k}\Vert_{0,h}ds\right) 
\end{eqnarray}
Then
\begin{eqnarray*}
 II &\leq& C(\mathcal{B}) \left(\left( \Vert X_{0h}-X_0\Vert_{0,h}+\Dt +h\right)+ \underset{k=0}{\sum^{m-1}}\int_{t_{k}}^{t_{k+1}}(t_m-s)^{-1/2}\Vert  X(t_{k})-X_{h}^{k}\Vert_{0,h} ds\right),\\
 \end{eqnarray*}
Combining estimates $I$ and  $II$ yields
{\small {
\begin{eqnarray}
\label{lastest}
  \lefteqn{\Vert X(t_{m})-X_{h}^{m})\Vert_{0,h} } && \nonumber\\
&\leq& C(\mathcal{B}) \left( \Vert X_{0h}-X_0\Vert_{0,h}+\Dt +h+ \underset{k=0}{\sum^{m-1}}\int_{t_{k}}^{t_{k+1}}(t_m-s)^{-1/2}\Vert X(t_{k})-X_{h}^{k}\Vert_{0,h} ds\right),
\end{eqnarray}
}}
where  $C(\mathcal{B})=C(\mathcal{B},\Omega,X,F,\mathbf{D},\mathbf{q},\zeta_{1})$.
Applying the generalized discrete Gronwall in (\ref{lastest}) ends the proof.
\end{proof}
\begin{remark}
 Using \propref{prop111}, if the initial solution $X_0 \in \mathcal{D}(A) \subset  H^{2}(\Omega) \subset C(\Omega)$,  and $X_{0h}=I_{h}(X_0)$, we obviously have  the following estimate
 \begin{eqnarray}
 \label{opti}
  \Vert X(t_{m})-X_{h}^{m})\Vert_{0,h} \leq C(\mathcal{B}) \left (\Dt +h\right).
 \end{eqnarray} 
 \end{remark}
 \begin{remark}
 Although optimal orders in time and space  are achieved for  $X_0 \in \mathcal{D}(A)$, this condition is not enough to ensure  the   
 regularity of  the solution as stated in  \thmref{T1} and \thmref{Ft}, which requires that $X_0 \in C^2(\Omega)$.
\end{remark}

\section{Numerical simulations}
\label{sim}
The goal here is to illustre the theoretical result \eqref{opti} and to compare  ETD1 scheme with  standard time-stepping methods, implicit Euler and semi implicit schemes. 
The difference between these schemes is the time integration as  the space discretization is performed using  finite  volume method.  
So our main focus will be the errrors in time. In \cite[Section 4, Figure 2]{Antoine},
the convergence in space has been studied for linear with exact solution and the optimal order in space \eqref{opti} has been reached. 
Here we compare the  time errors and the efficiency of the schemes.

Our code was implemented in Matlab 7.7.
In the legends of all of our graphs we use the following notation
\begin{itemize}
  \item ``Implicit with Newton'' denotes results from the implicit Euler with standard Newton's method.
  \item ``Implicit with Newton V'' denotes results from the implicit Euler with a variant of Newton's method where  the Jacobian is kept constant \cite{Antoine}.
  \item ``\Leja ETD1'' denotes results from ETD1 with real fast \Leja points for matrix exponential.
  \item ``Krylov ETD1'' denotes results from  ETD1 with Krylov subspace for matrix exponential.
  \item ``Semi implicit'' denotes results from the semi-implicit scheme.
\end{itemize}
We now evaluate the ETD1 method for a non-linear ADR problem where the
non-linear reaction term is given by $f( \mathbf{x},u)=-\theta u^{2}(1-u)$. 
We
take $\theta =100$, use a constant velocity of
$\underbar{q}=[-0.01,-0.01]^{T}$, and the dispersion tensor has the
entries $D_{1}=D_{2}=10^{-4}$. The domain is
$\Omega=\left[ 0,1\right)\times \left[ 0,1\right)$, which we
discretise with $h=\Delta x= \Delta y=10^{-2}$.
We can observe that $f$ satisfies the local Lipschitz condition \eqref{lipff}, so  existence and uniqueness of the local solution is ensured.
Indeed the global solution exists and is given by \cite{exact1}
\begin{equation}
\label{excats}
  C(x,y,t)=\left( 1+\exp\left( a(x+y-bt)+a(b-1)\right)\right)^{-1}
\end{equation}
where $a=\sqrt{\theta/\left(4\times10^{-4}\right)}$ and $b=-0.02+\sqrt{\theta\times 10^{-4}}$.
The initial condition and boundary conditions are defined with respect to the exact solution \eqref{excats}.

\begin{figure}[!ht]
  \subfigure[]{
    \label{FIG03a}
    \includegraphics[width=0.48\textwidth]{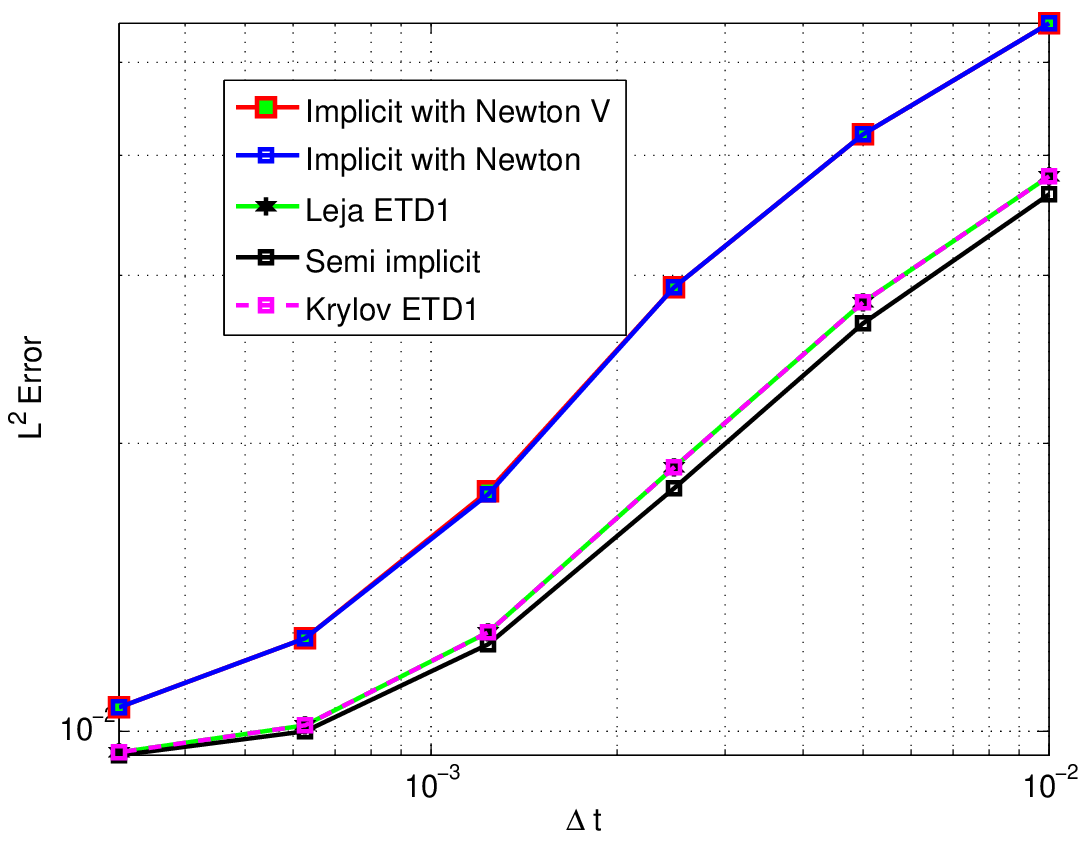}}
  \hskip 0.01\textwidth
  \subfigure[]{
    \label{FIG03b}
    \includegraphics[width=0.48\textwidth]{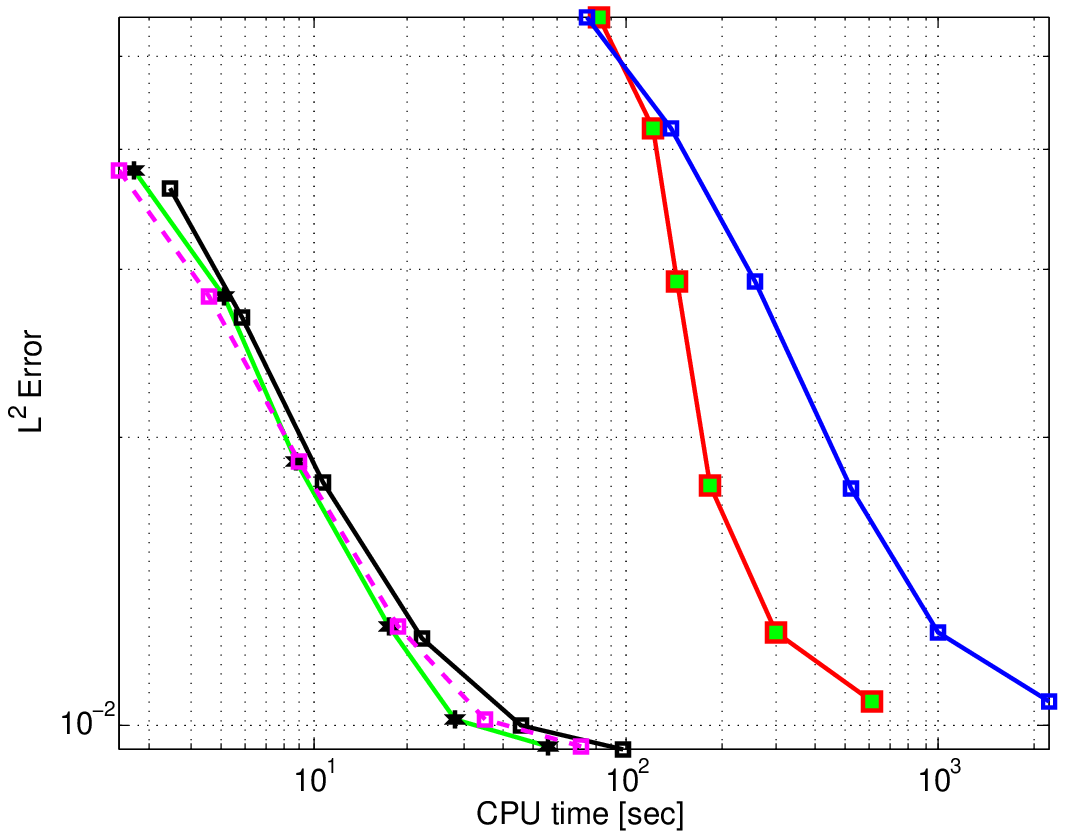}}
  \caption{(a) Convergence of the $L^2$ norm at $T=1$ as a function of $\Dt$. (b) The $L^2$ norm at $T=1$ as a function of CPU time. Both are for the the non-linear ADR in homogeneous porous media (Problem 2).}
  \label{FIG03}
\end{figure}

\figref{FIG03a} shows
the convergence as a function of the chosen time-step $\Dt$, measuring
the error at the final time $T=1$. The semi-implicit time-stepping
method and the ETD1 methods have similar error constants. All schemes
have the same rate of convergence  $\mathcal{O}(\Dt)$, which is predicted in our convergence result \eqref{opti}.
\figref{FIG03b} shows the $L^{2}$ error as a function of CPU time,
which is given in \figref{FIG03a}. ETD1 graphs are also similar
to the semi-implicit one. However, all three methods, ETD1
with Leja points and Krylov subspace technique and semi-implicit
time-stepping, outperform the implicit time-stepping methods.
\section*{Acknowledgements}
 This work was supported by the Overseas Research Students Awards Scheme (ORSAS) at Heriot
Watt University and  Robert Bosch Stiftung through the AIMS ARETE chair programme.


\begin{thebibliography}{99}

%
\bibitem{Afif}
M. Afif, \& B. Amaziane.
\newblock Convergence of finite volume schemes
for a degenerate convection-diffusion equation arising in flow in porous media.
\newblock{\em Comput. Methods Appl. Mech. Engrg}, \textbf{191}(2002): 5265--5286.

\bibitem{Henry}
D.~Henry.
\newblock {\em Geometric theory of semilinear parabolic equations}.
\newblock Number 840 in Lecture notes in mathematics. Springer, 1981.
 \bibitem{lions}
H.~Fujita and T.~Suzuki.
\newblock Evolutions problems (part1),
\newblock in P.~G. Ciarlet and J.~L. Lions(eds.), 
\newblock Handbook of Numerical  Analysis,  North-Holland, volume II, 1991,  pp. 789--928. 

\bibitem{Antoine}
A.~Tambue, G.~Lord, and S.~Geiger.
\newblock An exponential integrator for advection-dominated reactive transport
  in heterogeneous porous media.
\newblock {\em Journal of Computational Physics}, 229(10)(2010) 3957-- 3969.

\bibitem{ATthesis}
A.~Tambue.
\newblock {\em Efficient Numerical Schemes for Porous Media Flow}.
\newblock PhD thesis, Department of Mathematics, Heriot--Watt University, 2010.

\bibitem{FV}
R.~Eymard, T.~Gallouet, and R.~Herbin,
\newblock Finite volume methods.
\newblock  Updated preprint(2006) of the work appeared  in: P.~G.~Ciarlet, J.~L.~Lions (Eds.),
\newblock Handbook of Numerical Analysis Volume 7, North-Holland, Amsterdam, 2000, pp. 713--1020.

\bibitem{ost}
M. Hochbruck  and  A. Ostermann.
\newblock Exponential integrators.
 \newblock{\em Acta Numerica}, vol. 19 (2010), \textbf{209--286}.
 
\bibitem{AM} 
 B. Amaziane  and  M.  El Ossmani. 
\newblock Convergence Analysis of an Approximation
to Miscible Fluid Flows in Porous Media by Combining Mixed Finite Element Finite Volume Methods.
\newblock {\em Numer. Method.. Partial. Differ. E.}, \textbf{24}(3)(2008), 799--832.

\bibitem{EFV}
R. Eymard, D. Hilhorst, and  M. Vohralik.
\newblock A combined finite volume--finite element scheme for the
discretization of strongly nonlinear convection-diffusion-reaction problems on nonmatching grids,
\newblock {\em Numer. Method. Partial. Differ. E.}, \textbf{26}(3)(2010), 612--646.

\bibitem{TLGspe}
A. Tambue, S.  Geiger, and G.~J Lord. 
\newblock Exponential Time integrators for {3D} {R}eservoir {S}imulation.
 \newblock{\em In proceedings of the 12th European Conference on the Mathematics of Oil Recovery, Oxford, UK, DOI: 10.3997/2214-4609.20145032},2010.

\bibitem{TLGspe1}
S. Geiger, G. L. Lord and  Tambue, A. 
\newblock Exponential time integrators for stochastic partial differential equations in 3D reservoir simulation, 
 \newblock {\em Computational Geosciences}, \textbf{16}(2) (2012), 323--334.

 \bibitem{LEJAK}
A.~Martinez, ~L.~Bergamaschi, M.~Caliari, and M.~Vianello,
\newblock A massively parallel exponential integrator for advection-diffusion
  models,
\newblock {\em J. Comput. Appl. Math.} 231(1) (2009) 82--91.

\bibitem{LEJAK1}
L.~Bergamaschi, M.~Caliari,~A. Martinez, and M.~Vianello,
\newblock Comparing \Leja and {K}rylov approximations of large scale
  matrix exponentials,
\newblock {\em Comput. Sci. - ICCS}, \textbf{3994}(2006) 685--692.

\bibitem{MC}
S.~M.~Cox and P.~C.~Matthews,
\newblock Exponential time differencing for stiff systems,
\newblock {\em J. Comput. Phys.} 176(2) (2002) 430--455.

\bibitem{Tr}
 A.~K. Kassam and L.~N Trefethen.
  \newblock Fourth-order time stepping for stiff PDES.
 \newblock{\em SIAM Journal of Computing}, \textbf{26}(4)(2005), 1214--1233.
 
\bibitem{LE1}
J. Baglama, D. Calvetti, and L.  Reichel.
 \newblock Fast Leja points.
 \newblock{\em Electronic Transactions on Numerical Analysis}, \textbf{7}(1998), 124--140.

\bibitem{LE}
L. Bergamaschi,  M. Caliari, and   M.  Vianello.
\newblock The {RELPM} exponential integrator for {FE} discretizations of  advection-diffusion equations.
\newblock{\em Computational Science -- ICCS Proceedings}, \textbf{3039}(2004), pp. 434--442.

\bibitem{GTambue}
G.~J Lord and  A. Tambue, 
\newblock A modified semi--implict Euler-Maruyama scheme for finite element
discretization of {SPDEs} with Additive Noise.
\newblock{\em arXiv:1004.1998v1}, 2010.

\bibitem{GTambueexpo}
 G.~J Lord and A. Tambue.
 \newblock Stochastic Exponential Integrators for finite element discretization of {SPDEs}
 with Additive Noise.
 \newblock{\em http://arxiv.org/abs/1005.5315}, 2010.
 
\bibitem{GtambueIma}
G.~J Lord and A. Tambue.
\newblock Stochastic exponential integrators for the finite element discretization of SPDEs for multiplicative and additive noise.
 \newblock{\em IMA Journal of Numerical Analysis}, \textbf{33}(2) (2013), 515--543.

 \bibitem{pazy}
  A~Pazy.
 \newblock Semigroups of linear operators and applications to partial differential equations.
 \newblock{\em Applied Mathematical Sciences, volume 44, Springer-Verlag, New York}, 1983.
 
 \bibitem{MPFA}
I.~Aavatsmark.
\newblock An introduction to multipoint flux approximations for quadrilateral grids.
\newblock {\em Comput. Geosci.}, \textbf{6}(3-4)(2002):405--432.

\bibitem{MPFA1}
I.~ Aavatsmark, T. Barkve, \O{} B\o{}e, and T. Mannseth.
\newblock Discretization on non-orthogonal, quadrilateral grids for inhomogeneous, anisotropic media.
\newblock{\em J. Comput. Phys.}, \textbf{127}(1)(1996) 2--14.

\bibitem{FV1}
 R. Eymard, T.  Gallouet and  R. Herbin.
 \newblock A cell-centred finite-volume approximation for anisotropic diffusion operators on unstructured meshes in any space dimension.
\newblock {\em IMA Journal of Numerical Analysis}, \textbf{26}(2006), 326--353.
 
 \bibitem{EP}
P.~Knabner, and L. Angermann. 
Numerical methods for elliptic and parabolic partial differential equations solution.
\newblock{\em Springer Verlag, Berlin}, 2003.

\bibitem{evans}
L. C. EVans
\newblock Partial Differential Equations.
\newblock {\em Graduate Studies in Mathematics}, Vol. 19, 1997.

\bibitem{Stig}
S.~ Larsson.
\newblock Nonsmooth data error estimates with applications to
the study of the long-time behavior of finite element
solutions of semilinear parabolic problems. 
\newblock {\em Preprint 1992-36, Department of Mathematics, Chalmers University of
Technology, available at http://www.math.chalmers.se/$\sim$stig/papers/index.html }.

\bibitem{Lionsj}
\newblock J. L. Lions.
\newblock Espaces d'interpolation et domaines du puissances fractionnaires d'opérateurs.
\newblock {\em  J. Math. Soc. Japan},   \textbf{14} (1962) 233--241.

\bibitem{exact1}
M.~Caliari, M.~Vianello, and L.~Bergamaschi.
\newblock The {LEM} exponential integrator for advection--diffusion--reaction
  equations.
\newblock {\em J. Comput. Appl. Math.} 210(1-2) (2007) 56--63.

\bibitem{florin}
L.~S. Pop , M. Sepúlveda, F.~A. Radu, and  O.~P. Vera Villagran.
\newblock Error estimates for the finite volume discretization for the porous
medium equation.
\newblock {\em J. Comput. Appl. Math.} 234(7) (2010) 2135--2142.
\end{thebibliography}
\end{document}